\theoremstyle{plain}
\newtheorem{theorem}{Theorem}[section]
\newtheorem{proposition}[theorem]{Proposition}
\newtheorem{lemma}[theorem]{Lemma}
\newtheorem{definition}[theorem]{Definition}
\long\def\@savemarbox#1#2{\global\setbox#1\vtop{\hsize\marginparwidth 
%%%%%  \@parboxrestore #2}}
  \@parboxrestore\tiny\raggedright #2}}
\newcommand\fakelabel[2]{\@bsphack\if@filesw {\let\thepage\relax
   \newcommand\protect{\noexpand\noexpand\noexpand}%
\xdef\@gtempa{\write\@auxout{\string
      \newlabel{#1}{{#2}{\thepage}}}}}\@gtempa
   \if@nobreak \ifvmode\nobreak\fi\fi\fi\@esphack}
\def\SL@margintext#1{{\showlabelsetlabel{\tiny\{\SL@prlabelname{#1}\}}}}
\def\int{\text{interior}}
\def\hyp {\hbox {\rm {H \kern -2.8ex I}\kern 1.25ex}}
\def\reals {\hbox {\rm {R \kern -2.8ex I}\kern 1.15ex}}
\def\integers {\hbox {\rm { Z \kern -2.8ex Z}\kern 1.15ex}}
\def\naturals {\hbox {\rm {N \kern -2.8ex I}\kern 1.20ex}}
\def\rationals {\hbox {\rm { Q \kern -2.2ex l}\kern 1.15ex}}
\def\hyp {\hbox {\rm {H \kern -2.7ex I}\kern 1.25ex}}
\def\bar{\overline}
\def\hat{\widehat}
\def\CC{\mathcal{C}}
\def\AC{\mathcal{AC}}
\def\union{\cup}
\def\intersect{\cap}
\def\boundary{\partial}
\def\DD{\mathcal{D}}
\def\ssm{\smallsetminus}
\def\LL{\mathcal{L}}
\newcommand{\diam}{\operatorname{diam}}
\def\strutdepth{\dp\strutbox}
\def \ss{\strut\vadjust{\kern-\strutdepth \sss}}
\def \sss{\vtop to \strutdepth{
\baselineskip\strutdepth\vss\llap{$\diamondsuit\;\;$}\null}}
\def\strutdepth{\dp\strutbox}
\def \sst{\strut\vadjust{\kern-\strutdepth \ssss}}
\def \ssss{\vtop to \strutdepth{
\baselineskip\strutdepth\vss\llap{$\spadesuit\;\;$}\null}}
\def\strutdepth{\dp\strutbox}
\def \ssh{\strut\vadjust{\kern-\strutdepth \sssh}}
\def \sssh{\vtop to \strutdepth{
\baselineskip\strutdepth\vss\llap{$\heartsuit\;\;$}\null}}
\begin{document}

\title{Heegaard splittings with large subsurface distances}

\author{Jesse Johnson}

\address{\hskip-\parindent
        Department of Mathematics \\
        Oklahoma State University\\
        Stillwater, OK 74078\\
        USA}
\email{jjohnson@math.okstate.edu}

\author{Yair Minsky}
\address{\hskip-\parindent
        Department of Mathematics \\
        Yale University \\
        PO Box 208283 \\
        New Haven, CT 06520 \\
        USA}
\email{yair.minsky@yale.edu}

\author{Yoav Moriah}
\address{\hskip-\parindent
        Department of Mathematics \\
        Technion \\
        Israel}
\email{ymoriah@techunix.technion.ac.il}

\subjclass{Primary 57M}
\keywords{Distance of Heegaard splitting, Arc and Curve Complex, sweepouts, spanning sweepouts, splitting sweepouts }

\thanks{The first author was partially supported by NSF MSPRF grant   0602368.
The second author was partially supported by NSF grant 0504019.
The third author would like to thank Yale University for its hospitality during
 a sabbatical in which the research was done.}

\begin{abstract} We show that subsurfaces of a Heegaard 
  surface for which the relative Hempel distance of the splitting
 is sufficiently high  have to appear in
  any Heegaard surface  of genus bounded by half that distance.  
\end{abstract}

\maketitle

\section{Introduction}

 It was shown by Scharlemann and Tomova  ~\cite{ST}  that if a  $3$-manifold has a Heegaard  surface 
 $\Sigma$ so  that the Hempel distance $d(\Sigma)$ of the splitting (see \cite{hempel:complex}) is greater 
 than  twice  its genus $g(\Sigma)$,  then any other Heegaard surface $\Lambda$ of  genus  
 $ g(\Lambda)  < d(\Sigma) / 2$  is a stabilization of $\Sigma$, i.e., the result of attaching trivial  handles 
 to $\Sigma$.  Hartshorn \cite{hartshorn} had proved a similar result in which the surface $\Lambda$ is
 incompressible.

In this paper, we generalize this theorem to the case where theHeegaard splitting $\Sigma$ is not necessarily of high distance but has a proper essential subsurface $F \subset \Sigma$ so that the ``subsurface distance'' measured in $F$ is large:

\begin{theorem} \label{mainthm}
Let $\Sigma$ be a Heegaard  surface in a $3$-manifold $M$ of genus $g(\Sigma) \ge 2$, 
and let $F \subset \Sigma$ be a compact essential subsurface. Let  $\Lambda$ be
another  Heegaard surface for $M$ of genus $g(\Lambda)$.  If
$$ d_F(\Sigma) > 2g(\Lambda) + c(F), $$
then, up to ambient isotopy,  the intersection $\Lambda \cap \Sigma$ contains $F$. 

Here $c(F) = 0$ unless $F$ is an annulus, 4-holed sphere, or  1 or 2-holed torus, in which case $c(F) = 2$. 

\end{theorem}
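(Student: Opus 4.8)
The plan is to follow the Scharlemann–Tomova sweepout strategy, but localized to the subsurface $F$. First I would put $\Lambda$ in a sweepout position with respect to the handlebodies of $\Sigma$: realize $\Lambda$ as the level sets of a Morse function on $M$, so that a generic level $\Lambda_t$ is a surface whose complementary pieces on either side are a compression body coming from one side of $\Lambda$'s splitting. For each generic $t$ I would look at how $\Lambda_t$ meets the two handlebodies $H^+, H^-$ of the $\Sigma$-splitting. The key idea is to track, as $t$ varies, whether $\Lambda_t$ can be isotoped to be ``mostly on one side'' of $\Sigma$ — more precisely whether the restriction of $\Lambda_t$ near $F$ admits a compression into $H^+$ or into $H^-$. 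This gives a labeling of the generic intervals by subsets of $\{+,-\}$, and the heart of the argument is that the labels cannot jump from $\{+\}$ to $\{-\}$ without passing through an interval labeled by both or by neither.

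**The combinatorial core.** The main step is then the dichotomy: either some generic level $\Lambda_t$ is simultaneously compressible into both $H^+$ and $H^-$ in a way that is essential in $F$ — which, by the standard Casson–Gordon / Hartshorn-type argument applied relative to $F$, produces in $F$ a pair of curves (or arcs) bounding compressions on the two sides, giving a bound $d_F(\Sigma) \le 2g(\Lambda_t) + (\text{correction})$ contradicting the hypothesis — or else no level is doubly compressible in $F$, in which case the label must be everywhere $\{+\}$ or everywhere $\{-\}$ (no empty label can occur at a generic level, and the labels at the two ends of the sweepout are forced to be opposite), which is the contradiction that forces the alternative conclusion: $\Lambda$ can be isotoped so that $F \subset \Lambda \cap \Sigma$. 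The genus-to-distance conversion uses that a compression of $\Lambda_t$ meeting $F$ essentially yields, after surgering and taking the component of $\partial(\text{compression}) \cap F$, a curve or arc in $F$ whose distance from the respective disk set of $\Sigma$ in $\AC(F)$ is at most $2g(\Lambda)$; the term $c(F)$ absorbs the low-complexity cases where $\AC(F)$ is not connected in the usual way (annulus, $4$-holed sphere, $1$- or $2$-holed torus) and one must pass to a modified complex, losing an additive constant $2$.

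**The main obstacle.** The hard part will be making the notion of ``$\Lambda_t$ compresses into $H^\pm$ essentially in $F$'' precise and isotopy-invariant enough that (a) the labeling is well-defined on generic intervals, (b) it genuinely changes across the index-$0$ and index-$3$ critical points of the sweepout so that the endpoints carry opposite labels, and (c) a doubly-labeled level really does produce the promised arc/curve pair in $F$ rather than some curve that has drifted off $F$ or become inessential. Controlling the interaction of the compressing disks with $\partial F$ — so that compressions genuinely record information in the arc-and-curve complex $\AC(F)$ rather than in $\AC(\Sigma)$ — is the delicate point; I expect one needs a careful innermost-disk / outermost-arc argument to push the boundary of any relevant compressing disk to meet $\Sigma$ in curves and arcs lying in $F$, and to argue that when this cannot be done the surface $\Lambda$ must already contain a parallel copy of $F$. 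A secondary technical point is handling the ``spinning'' or isotopy along $\partial F$ that is invisible to the curve complex but visible to the arc-and-curve complex; this is presumably where the annulus case and the $c(F)=2$ correction genuinely bite, and I would treat those low-complexity surfaces separately using the explicit structure of their arc-and-curve complexes.
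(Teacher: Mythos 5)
There is a genuine gap, and it sits exactly at the positive conclusion of the theorem. Your dichotomy ends its ``good'' branch in a contradiction (``the labels at the two ends are forced to be opposite \dots which is the contradiction that forces the alternative conclusion''), but a contradiction in a labeling scheme does not produce an isotopy of $\Lambda$ whose intersection with $\Sigma$ contains $F$. In the paper the good branch is the \emph{$F$-spanning} case, and it requires a separate constructive argument (Proposition \ref{spanning is good}): one finds levels $t_-<t_+$ of the $\Sigma$-sweepout so that $F\times\{t_-\}$ and $F\times\{t_+\}$ lie on opposite sides of a fixed level $\Lambda_s$; then $\Lambda_s\cap(F\times[t_-,t_+])$ is nonzero in $H_2(F\times J,\partial F\times J)$, compressing it yields an incompressible component homologous to the fiber, Hempel's Theorem 10.2 shows that component is isotopic to a level $F\times\{t\}$, and undoing the compressions isotopes $\Lambda$ onto a surface containing $F$. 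Nothing in your proposal supplies this step, and your chosen invariant --- whether $\Lambda_t$ compresses into $H^\pm$ --- is not the one that feeds such a product-region argument. The paper instead records, for each pair $(t,s)$ of a \emph{double} sweepout, whether $F_t\subset\Sigma_t$ can be isotoped into $V^-_s$ or $V^+_s$ (the regions $R^F_a$, $R^F_b$ of the Rubinstein--Scharlemann graphic); it is precisely a horizontal line meeting both regions that hands you the two parallel copies of $F$ on opposite sides of $\Lambda_s$.

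Two secondary points. First, the correction $c(F)=2$ is misattributed: it has nothing to do with $\AC(F)$ being ``not connected in the usual way.'' It arises because for an annulus, $4$-holed sphere, or $1$- or $2$-holed torus the spanning/splitting dichotomy can fail at a single valence-$4$ vertex of the graphic where two saddles occur at once and $F$ embeds in the resulting double-saddle neighborhood $Q_0$; the level curves just before and just after that vertex are only distance $\le 3$ apart in $\AC(F)$ (Lemma \ref{Q and Z}) instead of $\le 1$, which is where the extra $2$ enters. Second, the bound $2g(\Lambda)$ in the splitting case is not ``one compression gives distance at most $2g(\Lambda)$''; it is a count of disjoint essential pairs of pants among level sets of $f|_\Lambda$, bounded by $-\chi(\Lambda)=2g(\Lambda)-2$, plus $2$ for the meridian disks appearing at the ends of a maximal interval on which trivial-in-$\Lambda$ loops are trivial in $\Sigma_t$ (Lemma \ref{interval}), with Lemma \ref{disks everywhere} needed to guarantee those meridians actually cut $F$. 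You correctly flag the interaction of compressing disks with $\partial F$ as the delicate point, but the machinery that resolves it in the paper is the saddle-transition analysis of Section \ref{saddle transitions}, which your single-sweepout setup does not provide.
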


Here, $d_F(\Sigma)$ is the distance between the subsurface projections of the disk sets of the handlebodies on the two sides of $\Sigma$ to the ``arc and curve complex'' of $F$.   For precise definitions see Section
 \ref{HS}. This result can be paraphrased as follows:  If the two disk sets of a Heegaard splitting intersect on a subsurface of the Heegaard surface in a relatively ``complicated'' way, then any other Heegaard surface whose genus is not too large must contain that subsurface.

One can view this in the context of a number of results in 3-manifold topology and geometry where ``local'' complication, as measured in a subsurface, yields some persistent topological or geometric feature. An example with a combinatorial flavor is in Masur-Minsky \cite{masur-minsky:complex1,masur-minsky:complex2}, where one studies certain quasi-geodesic paths in the space of markings on a surface. If the endpoints of such a path have projections to the arc and curve complex of a subsurface $F$ which are sufficiently far apart, then the path is forced to have a long subinterval in which $\boundary F$ is a part of the markings. 

In \cite{minsky:ELCI, brock-canary-minsky:ELCII}, Brock-Canary-Minsky  consider the geometry of hyperbolic structures on $\Sigma\times{\mathbb R}$, as controlled by their ``ending invariants,'' which can be thought of as markings on $\Sigma$. A subsurface $F$ for which the projections of the ending invariants are far apart yields a ``wide'' region in the manifold isotopic to $F\times [0,1]$, where the length of $\boundary F$ is very
short. 

A similar, but incompletely understood, set of phenomena can occur for Heegaard splittings of hyperbolic 
3-manifolds, where the disk sets play the role of the ending invariants. This is the subject of work  in
progress by Brock, Namazi, Souto and others. 

In the purely topological setting, Lackenby \cite{lackenby:amalgamated} studies Heegaard  splittings of manifolds obtained by sufficiently complicated gluings along incompressible, acylindrical boundaries of genus at least 2. Bachman-Schleimer \cite{bachman-schleimer:bundles} obtain lower bounds on Heegaard genus in a surface bundle whose monodromy map is sufficiently complicated. Bachman-Schleimer-Sedgwick
\cite{bachman-schleimer-sedgwick} study Heegaard genus for manifolds glued along tori by complicated maps.

The methods of our proof are extensions, via subsurface projections,  of the work in Johnson 
\cite{johnson:flipping}, which itself builds on methods of Rubinstein-Scharlemann \cite{rubinstein-scharlemann:graphic}. To a Heegaard splitting we associate a ``sweepout'' by parallel surfaces of the manifold minus a pair of spines, and given two surfaces $\Lambda$ and $\Sigma$ and associated sweepouts,  we examine the way in which they interact.  In particular under some natural genericity conditions we can assume
that one of two situations occur: 

\begin{enumerate}
\item  $\Lambda$ {\em $F$-spans} $\Sigma$, or

\item $\Lambda$ {\em $F$-splits} $\Sigma$.
\end{enumerate}

\vskip7pt

For precise definitions see Section \ref{above below}. The case of $F$-spanning implies that, up to isotopy, there is a moment in the sweepout corresponding to  $\Sigma$ when the subsurface parallel to $F$ lies in the upper half of $M\ssm \Lambda$, and a moment when it lies in the lower half. It then follows that $\Lambda$ separates the product region between these two copies of $F$, and it follows fairly easily that it be isotoped to contain $F$. 

In the $F$-splitting case we are able to show that, for each moment in the sweepout corresponding to 
$\Sigma$, the level surface intersects $\Lambda$ in curves that have essential intersection with $F$. By studying the way in which these intersections change during the course of the sweepout, we are able to
use the topological complexity of $\Lambda$ to control the subsurface distance of $F$.

Combining these two results and imposing the condition that $d_F(\Sigma)$ is greater than a suitable function of $g(\Lambda)$ forces the first, i.e. $F$-spanning, case to occur.

The discussion is complicated by some special cases, where $F$ has particularly low complexity, in which the dichotomy between $F$-spanning and $F$-splitting doesn't quite hold. In those cases we obtain slightly different bounds. 

\medskip

\subsection*{Outline}
In Section \ref{HS} we recall the definitions of Heegaard splittings, curve complexes and subsurface projections.  In Section \ref{above below} we discuss sweepouts, pairs of sweepouts and the Rubinstein-Scharlemann graphic, and use this to define $F$-splitting and  $F$-spanning and variations. In Lemma 
\ref{dichotomy} we show that $F$-spanning and $F$-splitting are generically complementary conditions, and also work out the situation for the low-complexity cases. 

In Section \ref{spanning case} we consider the $F$-spanning case, and show that it leads to the conclusion that $\Lambda$ can be isotoped to contain $F$. 

In Section \ref{saddle transitions} we give a more careful analysis of the local structure of non-transverse intersections of surfaces in a pair of sweepouts. In particular we use this to quantify the way in which intersection loops, and their projections to a subsurface $F$, can change as one moves the sweepout surfaces. 

In Section \ref{boundsect} we show that the $F$-splitting condition leads to an upper bound on $d_F(\Sigma)$. The proof makes considerable use of the structure developed in Section \ref{saddle transitions}, with a fair amount of attention being necessary to handle the exceptional low-complexity cases. 

Finally in Section \ref{proof main} we put these results together to
obtain the proof of the main theorem.

\subsection*{Acknowledgements}
The authors are grateful to Saul Schleimer for pointing out an error
in the original proof of Lemma \ref{disks everywhere}.

\section{Heegaard splittings and curve complexes}\label{HS}

\subsection*{Heegaard splittings}
A handlebody $H$ is a $3$-manifold homeomorphic to a regular neighborhood of a finite, connected 
graph in $S^3$. The graph will be called a {\it spine} of the handlebody. A compression body 
$W$ is  a $3$-manifold obtained from a connected surface $S$ crossed with an interval $[0, 1]$ by 
attaching $2$-handles  to $S \times \{0\}$ and capping off $S^2$ boundary components, if they 
occur, by $3$-balls. The connected surface $S \times \{1\}$ is denoted by $\partial _{+}W$ and  
$\partial W \smallsetminus \partial_{+}W$ is denoted by  $\partial_{-}W$.  A {\it spine} of a compression 
body $W$ is the union of $\partial_{-}W$ with a (not necessarily
connected) graph $\Gamma$  so that  $W$ is homeomorphic to a closed regular neighborhood of $\partial_{-}W \cup \Gamma$.

A  Heegaard splitting of genus $g \geq2 $ for a $3$-manifold $M$ is a triple $(\Sigma, H^-, H^+)$, 
where  $H^+, H^-$ are genus $g$ handlebodies (compression bodies if $\partial M \neq \emptyset$) 
and  $\Sigma = \partial H^+ =  \partial H^- = H^+ \cap H^-$ is the Heegaard surface so that 
$M = H^+ \cup_{\Sigma} H^-$.

\subsection*{Curve complexes and Hempel distance}
For any surface $\Sigma$ there is an associated  simplicial complex called the {\it curve complex} 
and denoted by $\mathcal{C}(\Sigma)$. An $i$-simplex in $\mathcal{C}(\Sigma)$ is a collection 
$([\gamma_0], \dots, [\gamma_i])$ of isotopy classes of mutually disjoint essential simple closed curves 
in $\Sigma$.  On the $1$-skeleton $\mathcal{C}^1(\Sigma)$  of the curve complex $\mathcal{C}(\Sigma)$  there is a natural path metric $d$ defined by assigning length 1 to every edge. The subcollection 
$\mathcal{D}(H^\pm)$, of isotopy classes of curves in $\Sigma$, that bound disks in $H^\pm$ (also 
called {\em   meridians}) is called the {\it  handlebody set} associated with $H^\pm$ respectively.

Given a Heegaard splitting $(\Sigma, H^-,H^+)$ we define the {\it  Hempel distance} $d(\Sigma)$:

$$d(\Sigma) = d_{\CC^1(\Sigma)} ( \DD(H^+),\DD(H^-))$$
(where distance between sets is always minimal distance)

\medskip

When a surface  $F$ has boundary we can define the {\it arc and curve complex}  $\mathcal{AC}(F)$, by considering isotopy classes of essential (non-peripheral) simple closed curves and properly embedded arcs. 
If $F$ is an annulus, there are no essential closed curves, and  the isotopy classes of essential arcs should be
taken rel endpoints. As before an $n$-simplex is a collection of $n + 1$ isotopy classes with   disjoint representative loops / arcs. We denote by $d_F$ the path metric on the 1-skeleton of $\mathcal{AC}^1(F)$ that assigns length 1 to every edge. 

If $F$ is a connected  proper essential subsurface in $\Sigma$, there is a map 
$$ \pi_F:\CC^0(\Sigma) \to \AC^0(F) \union\{\emptyset\} $$
defined as follows (see Masur-Minsky \cite{masur-minsky:complex2} and Ivanov \cite{ivanov:rank,ivanov:subgroupsbook}):  First assume $F$ is not an annulus.  Given a simple closed curve 
$\gamma$ in $\Sigma$, isotope it to intersect $\boundary F$ minimally. If the intersection is empty let 
$\pi_F(\gamma) = \emptyset$. Otherwise consider the isotopy classes of components of  $F\intersect\gamma$ 
as a simplex in $\AC(F)$, and select (arbitrarily) one vertex to be $\pi_F(\gamma)$. If $F$ is an annulus,
 let $\hat F \to \Sigma$ be the annular cover associated to $F$, compactified naturally using the boundary at
infinity of ${\mathbb H}^2$,  lift $\gamma$ to a collection of disjoint properly embedded arcs in $\hat F$, and arbitrarily select an essential one (which must exist if $\gamma$ crosses $F$ essentially) to be 
$\pi_F(\gamma)$. If $\gamma$ does not cross $F$ essentially we again let $\pi_F(\gamma) = \emptyset$. 
By abuse of notation we identify $\AC(\hat F)$ with $\AC(F)$.

Now if  $(\Sigma, H^-, H^+)$ is a Heegaard splitting of a 3-manifold $M$ and $F \subset \Sigma$ is a connected proper essential  subsurface,  let $\mathcal{D}_F(H^-)  = \pi_F(\DD(H^-))$  and 
$\mathcal{D}_F(H^+)  = \pi_F(\DD(H^+))$ -- the projections to $F$ of all loops that bound essential disks in $H^-$ and $H^+$, respectively. The \textit{$F$-distance} of $\Sigma$, which we  will write
$d_F(\Sigma)$, is the distance between these two sets, 
$$ d_F(\Sigma) = d_{\AC^1(F)} ( \DD_F(H^+), \DD_F(H^-)). $$

We will have use for  the following fact, which is a variation on a result of
Masur-Schleimer:

\begin{lemma}\label{disks everywhere}
Let $\Sigma$ be the boundary of a handlebody $H$ and let $F\subset \Sigma$ be an essential connected subsurface of $\Sigma$ which is not  a 3-holed sphere. If $\Sigma \smallsetminus F$ is compressible in $H$
then $\pi_F(\DD(H))$ comes within distance 2 of every vertex of $\AC(F)$, provided $F$ is not a 4-holed sphere. If it is a 4-holed sphere the distance bound is 3. 

\end{lemma}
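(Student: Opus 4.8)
The plan is to find a disk in $H$ whose boundary, after projecting to $F$, is within distance $2$ (resp. $3$) of an arbitrary given vertex $\alpha\in\AC(F)$. The hypothesis gives a compressing disk $D$ for $H$ with $\partial D\subset\Sigma\ssm F$, so $\partial D$ is disjoint from $F$ and hence from $\partial F$. The idea is to perform a sequence of disk swaps or band moves that push disk boundaries across $F$, using $\alpha$ as a guide, so that eventually some disk boundary crosses $F$ and its projection is controlled by $\alpha$.

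First I would set up the following: pick a properly embedded arc or curve $a$ in $F$ representing $\alpha$, and extend $\partial a$ (if $\alpha$ is an arc) to a simple closed curve $\gamma$ in $\Sigma$ that crosses $F$ essentially, with $\pi_F(\gamma)=\alpha$ or at least $d_F(\pi_F(\gamma),\alpha)\le 1$; this is where the $3$-holed sphere exclusion and the annulus/$4$-holed sphere bookkeeping enter, since one needs enough room in $F$ to realize $\alpha$ by an essential closed curve or a short arc system. Next, the key step: consider the curve $\gamma$ together with the disk $D$. Since $\partial D$ is disjoint from $F$, it is disjoint from $\gamma\cap F$ but may meet $\gamma$ outside $F$. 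I would run a surgery/outermost-arc argument: band $D$ along a subarc of $\gamma$ that crosses $\partial F$, or use the fact that $\Sigma\ssm F$ being compressible lets one build a disk $D'$ in $H$ whose boundary runs parallel to an arc of $\gamma$ across $F$. Concretely, take $\gamma$, cut it along $\partial F$ into arcs in $F$ and arcs in $\Sigma\ssm F$; replace the arcs of $\gamma$ outside $F$ by arcs in $\partial D$ (pushed off), producing a curve $\delta$ that bounds a disk in $H$ (being a band-sum of $D$ with an annulus neighborhood) and whose intersection with $F$ consists of subarcs of $\gamma\cap F$. Then $\pi_F(\delta)$ is a subset of the arcs of $\gamma\cap F$, which is within distance at most $1$ of $\alpha$ in $\AC(F)$ (disjoint arcs), plus the one-step cost of passing from $\alpha$ to $\pi_F(\gamma)$: total $\le 2$, and $\le 3$ in the $4$-holed sphere case where disjoint arcs can be distance $2$ apart.

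The main obstacle I anticipate is controlling the surgery so that the resulting curve $\delta$ is (a) essential in $\Sigma$, (b) genuinely bounds a disk in $H$ rather than collapsing to something trivial, and (c) actually crosses $F$ essentially, i.e.\ $\pi_F(\delta)\ne\emptyset$. Ensuring (c) is the delicate point: the band moves along $\partial D$ could cancel all essential intersections of $\gamma$ with $F$, and one must choose $\gamma$ and the surgery arcs to prevent this. The fix is to choose $\gamma$ so that $\gamma\cap F$ already contains an essential arc or curve that is disjoint from $\partial D$ and survives the banding; this uses essentiality of $F$ together with $F$ not being a $3$-holed sphere (so that $\AC(F)$ is nonempty and connected with the stated diameter-in-one-surgery properties). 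I would also need to check the low-complexity cases ($F$ an annulus, $4$-holed sphere, $1$- or $2$-holed torus) by hand, since there the combinatorics of "two disjoint arcs/curves have distance $\le 1$" can fail — precisely the $4$-holed sphere, where Farey-graph geometry forces the weaker bound $3$.
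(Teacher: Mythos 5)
Your core instinct --- band-summing the meridian $D$ across $F$ along an arc so that the new meridian's projection to $F$ is a prescribed arc --- is the right mechanism, and it is the one the paper uses. But as written the proposal has two genuine gaps. First, the construction itself: ``replace the arcs of $\gamma$ outside $F$ by arcs in $\boundary D$'' does not produce a curve bounding a disk in $H$; splicing arcs of $\gamma$ with arcs of $\boundary D$ is not a band sum. The valid construction is the band sum of \emph{two parallel copies of $D$} along a single embedded arc that extends an essential arc $\alpha$ of $F$ and meets $\boundary D$ from \emph{opposite sides}; only then is the result a disk whose boundary meets $F$ in two copies of $\alpha$. Arranging the ``opposite sides'' condition requires a preliminary step you omit: one must first find a meridian $\delta\subset\Sigma\ssm F$ such that $F$ can be joined by paths in $\Sigma\ssm F$ to both sides of $\delta$ (the paper gets this by replacing a separating meridian with a nonseparating one on the side away from $F$).

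Second, and more seriously, the band sum only realizes arcs of $F$ whose endpoints lie on the particular boundary components $\gamma_1,\gamma_2$ of $F$ that are accessible from the two sides of $\delta$. An arbitrary vertex $\beta\in\AC(F)$ --- a closed curve, an arc on other boundary components, or an arc/curve separating $\gamma_1$ from $\gamma_2$ --- cannot be taken as your guide curve $\gamma$ and realized directly. Your claim that $\pi_F(\delta')$ ends up within distance $1$ of the target would, if correct, prove a bound of $1$, which is false; the actual bound of $2$ comes from a case analysis producing an intermediate vertex $\beta'$ disjoint both from $\beta$ and from some realizable arc, and the $4$-holed sphere exception arises exactly when $\beta$ separates $\gamma_1$ from $\gamma_2$ there. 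Finally, the annulus case is not addressed at all: a single band sum yields one arc of the compactified annular cover $\hat F$, and vertices of $\AC(F)$ can be arbitrarily far from it, so one must Dehn-twist the band arc around $F$ (as the paper does) to reach within distance $2$ of an arbitrary arc. Without these three ingredients the argument does not close.
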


\begin{proof}
We give the proof in the non-annular case first.  We claim that $\Sigma\ssm F$ must contain a meridian 
$\delta\in \DD(H)$ such that $ F$ can be connected by a path to either side of $\delta$: Compressibility of 
$\Sigma \ssm F$ yields some meridian, which has the desired property if it is nonseparating in $\Sigma$. 
If it is separating,  then on the side complementary to $F$ we can find a nonseparating meridian. 

Let $\gamma_1$ and $\gamma_2$ be components of $\boundary F$ which can be connected by paths to 
the two sides of $\delta$ (possibly $\gamma_1=\gamma_2$). If $\alpha$ is any essential embedded arc in $F$ with endpoints on $\gamma_1$ and $\gamma_2$ then $\alpha$ can be extended to an embedded arc meeting $\delta$ on opposite sides. A band sum between two copies of $\delta$ along this arc yields a new meridian $\delta'$.

If $\delta$ is not isotopic to either $\gamma_1$ or $\gamma_2$, then its essential intersection with $F$ is two copies of the arc $\alpha$, so that $\pi_F(\delta') = \alpha$.  If it is isotopic to $\gamma_1$ or $\gamma_2$ (or both), then $\pi_F(\delta')$ is an arc or closed curve contained in a regular neighborhood of 
$\alpha\union\gamma_1\union \gamma_2$. 

We now need to show that for every vertex $\beta\in\AC(F)$, there is a path of length at most 2 from $\beta$ to $\pi_F(\delta')$, for some choice of $\alpha$. 

Suppose first that $\gamma_1=\gamma_2$. In this case $\delta$ cannot be isotopic to $\gamma_1$, because $\gamma_1$ cannot be connected to both sides of $\delta$ without going through another boundary
component of $F$. Hence in this case $\pi_F(\delta') = \alpha$. Since $F$ is not an annulus, cutting along 
$\beta$ gives a surface no component of which is a disk; hence an essential arc $\alpha$ disjoint from $\beta$ and with endpoints on $\gamma_1$ exists. This gives $d_{\AC(F)}(\beta, \pi_F(\DD(H))) \le 1$. 

Suppose now $\gamma_1\ne \gamma_2$. If $\beta$ is disjoint from $\gamma_1$ and $\gamma_2$ and doesn't separate them,  then we can choose $\alpha$ connecting $\gamma_1$ and $\gamma_2$ and disjoint from $\beta$, so that $d_{\AC(F)}(\beta, \pi_F(\delta')) \le 1$. 

If $\beta$ does separate $\gamma_1$ from $\gamma_2$, let $X_1$ and $X_2$ be the components of 
$F\ssm \beta$. If one of them is non-planar, it contains a nonseparating closed curve $\beta'$, so that 
$d_{\AC(F)}(\beta',\pi_F(\delta')) \le 1$ and hence $d_{\AC(F)}(\beta,\pi_F(\delta')) \le 2$. 

If both $X_i$ are planar, and $F$ is not a 4-holed sphere, then at least one, say $X_1$, has two boundary components other than $\beta$ and $\gamma_1$ or $\gamma_2$. An arc $\beta'$ connecting those boundary
components is disjoint from $\gamma_1$ and $\gamma_2$ and does not separate them, so again we are done using the previous cases. 

Now suppose $\beta$ is an arc with endpoint on $\gamma_1$ or  $\gamma_2$, or both. If it has endpoints on both, then choose $\alpha = \beta$, and note that $\beta$ is disjoint from $\pi_F(\delta')$ in all cases. 
If $\beta$ meets just $\gamma_1$, say, let $\alpha$ be an arc connecting $\gamma_1$ and $\gamma_2$ and disjoint from $\beta$, and consider a regular neighborhood $N$ of
 $\gamma_1\union \gamma_2\union \alpha\union\beta$. Then $\boundary N\ssm\boundary F$ is either an arc
or a pair of closed curves which is disjoint from $\beta, \gamma_1, \gamma_2$ and $\alpha$. The arc, and at least one of the closed curves, is essential as long as $F$ is not a 3-holed sphere. Let $\beta'$ be this curve or arc, and again we see that the distance from $\beta$ to $\pi_F(\delta')$, via $\beta'$, is 2. 

We have yet to handle the case that $F$ is a 4-holed sphere, and $\beta$ separates $\gamma_1$ from 
$\gamma_2$. In this case, it is possible that $\pi_F(\delta')$ is a closed curve separating
$\gamma_1\union\gamma_2$ from the other two boundary components, and in this case one can check that $d_{\AC(F)}(\beta, \pi_F(\delta')) = 3$. 

Finally we consider the case that $F$ is an annulus.  As before there is a meridian $\delta$ that does not separate the boundaries of $F$ in $\Sigma\ssm F$. We may assume that $\delta$ is not isotopic to
$\boundary F$, because if it were then its complement would contain other meridians disjoint from $F$. 
Let $\alpha'$ be an embedded arc connecting the two sides of $\delta$ and passing essentially through $F$, and let $\delta'$ be the result of the band sum, as before. 

Let $\hat F\to \Sigma$ be the compactified annular cover associated to $F$. There is a lift $\hat\alpha$ of 
$\alpha'$ to $\hat F$ connecting lifts $\hat\delta_1$ and $\hat\delta_2$ of $\delta$ which meet opposite
sides of $\boundary \hat F$. There is then a lift $\hat\delta'$ of $\delta'$ disjoint from 
$\hat \delta_1\union\hat\alpha\union\hat\delta_2$, and connecting opposite sides of $\hat F$. This is a representative of $\pi_F(\delta')$.

A Dehn twist of $\alpha'$ around $F$ has the following effect on $\hat\delta'$: it performs a single Dehn twist about the core of $\hat F$, as well as a homotopy of the endpoints of $\hat\delta'$ which stays outside of the disks $D_i$ cobounded by $\hat\delta_i$ and $\boundary\hat F$ ($i=1,2$). 

Now let $\beta$ be an arc representing a vertex of $\AC(\hat F)$. There exists a disjoint arc $\beta'$ whose endpoints lie in $D_1$ and $D_2$. It follows that after $n$ Dehn twists on $\alpha'$ we obtain an arc 
$\alpha_n$ whose associated $\delta'_n$ has lift $\hat\delta'_n$ which is
disjoint from $\beta'$. We conclude that $d_{\AC(F)}(\beta,\pi_F(\delta'_n) \le 2$. 

\end{proof}

\section{Sweepout pairs} \label{above below}

In this section we discuss {\em sweepouts} of a 3-manifold representing a Heegaard splitting, and consider the interaction of pairs of sweepouts using the {\em Rubinstein-Scharlemann graphic} and generalizations of the
notions of {\em mostly above} and {\em mostly below} from Johnson \cite{johnson:flipping}. We will formulate
relative versions of these which allow us to consider subsurfaces, and from this develop a relative version of the {\em spanning} and {\em splitting} relations from \cite{johnson:flipping}.

\subsection*{Sweepouts}
Given a $3$-manifold $M$,a {\it sweepout} of $M$ is a smooth function $f : M \to  [ -1, 1]$ so that each
$t \in (-1, 1)$ is a regular value, and the level set $f^{-1}(t)$ is a Heegaard surface. Furthermore each of the  sets  $\Gamma^+ = f^{-1}(1)$ and set $\Gamma^- = f^{-1}(-1)$ are spines of the respective compression 
bodies. When this happens we say the sweepout {\em represents} the Heegaard splitting associated to each level surface. It is clear (with a bit of attention to smoothness at the spines) that every Heegaard
splitting can be represented this way.

\medskip

Two sweepouts $f$ and $h$  of $M$ determine a smooth function  $f \times h :  M \to [-1, 1] \times [-1, 1]$.  The differential $D(f\times h)$ has rank 2 (or $\dim Ker(D(f\times h))$ = 1) wherever the level sets of $f$ and $h$ are transverse. Thus we define the {\em   discriminant set} $\Delta$ to be the set of points of $M$ for which 
 $\dim Ker(D(f \times h)) > 1$. The discriminant, and its image in $[-1,1]\times[-1,1]$, therefore encode the configuration of tangencies of the level sets of $f$ and $h$. 

A smooth function $\varphi : M \to N$ between smooth manifolds $M, N$ is {\it stable} if there is  
a neighborhood $U$ of $\varphi$ in $C^{\infty}(M, N)$ such that any map $\eta \in U $ is {\em isotopic} to 
$ \varphi$ through a family of maps in $U$. (We say that $\eta$ and $\varphi$ are isotopic when there are diffeomorphism $\beta:M\to M$ and $\alpha:N\to N$, isotopic to the identity, such that 
$\alpha\circ\eta\circ\beta = \varphi$.) Here the topology on $C^\infty(M,N)$ is the Whitney topology, also known as the strong topology on $C^\infty$. This topology differs from the weak (compact-open) topology on 
$C^\infty$ for non-compact domains, which is relevant here since we consider stability on the complement of the spines. 

Kobayashi-Saeki~\cite{KS} show that, after isotopies of $f$ and $h$, one can arrange that $f \times h$ is stable on the complements of the four spines.  When that holds, the kernel of $D(f\times h)$ (off the spines) is always of dimension at most 2  and it follows from Mather~\cite{Ma} that $\Delta$  is a smooth manifold of dimension one.

The image $(f \times h)(\Delta)$ is a graph in  $[ -1, 1] \times [-1,   1]$  with smooth edges, called the 
{\it graphic}, or the {\em Rubinstein-Scharlemann graphic} (See  Rubinstein-Scharlemann~\cite{RS}). 

We call $f\times h$ {\em generic} if it is stable away from the spines and   each arc $[-1,1] \times \{s\}$
in the square intersects at most one vertex of the graphic.  The following lemma of Kobayashi-Saeki \cite{KS} justifies this term:

\begin{lemma}\label{generic is generic}
Any pair of sweepouts can be isotoped to be generic. 

\end{lemma}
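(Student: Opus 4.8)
The plan is to take the Kobayashi-Saeki stability theorem as a black box and then add an elementary perturbation that puts the vertices of the graphic in general position with respect to the horizontal foliation of the square.

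First I would apply Kobayashi-Saeki~\cite{KS}: after an ambient isotopy we may assume $f\times h$ is stable on the complement of the four spines. As recorded in the text, this already implies that the discriminant $\Delta$ is a smooth $1$-manifold and that the graphic $G=(f\times h)(\Delta)$ is a finite graph with smooth edges; write $v_1,\dots,v_k$ for its vertices. Using the local normal forms for a stable map to a surface, each $v_i$ is of one of the standard types --- a transverse crossing of two fold branches (two simultaneous tangencies of the level sets of $f$ and $h$), a cusp point, or one of finitely many vertices dictated by the product structure of the sweepouts near a spine --- and in each case the finite set of points of $M$ ``responsible'' for $v_i$ is explicit. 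The observation I want is that these supporting sets are pairwise disjoint: two distinct crossing vertices use four distinct tangency points, a third simultaneous tangency being excluded by stability; cusp points are isolated in $\Delta$; and the spine vertices lie over the finitely many vertices and edges of the spines.

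It then remains only to arrange that each arc $[-1,1]\times\{s\}$ meets at most one $v_i$, i.e. that the second coordinates $\mathrm{pr}_2(v_1),\dots,\mathrm{pr}_2(v_k)$ are pairwise distinct. I would achieve this by further small ambient isotopies, one vertex at a time. For a fixed $i$, choose an isotopy of $M$ supported in small balls about the points responsible for $v_i$ and disjoint from the balls used for the other vertices; composing $h$ with it moves $v_i$ within the square while leaving the remaining vertices fixed, and for a generic (Sard) choice of the isotopy it changes $\mathrm{pr}_2(v_i)$ so as to avoid the finitely many heights $\mathrm{pr}_2(v_j)$, $j\ne i$. Since stability is an open condition in the strong Whitney topology --- which is exactly why Kobayashi-Saeki work with that topology on the non-compact complement of the spines --- all these perturbations can be kept small enough to stay inside the stable locus and to create no new coincidences among the heights. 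Iterating over $i$ yields a pair for which $f\times h$ is stable off the spines and every horizontal arc meets at most one vertex, which is the asserted genericity.

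The step I expect to need the most care is the independence claim used above: that a single vertex can be moved, at nonzero speed and without dragging the others along, by an isotopy supported near its preimage. This is really just a matter of unwinding the stable normal forms and checking that the location of each vertex depends smoothly on the $1$-jet of $f\times h$ at the responsible points; but it relies on the disjointness of supports from the second paragraph, and one must be slightly careful with the vertices near the spines, where the stable picture is unavailable and one instead uses the explicit product-like structure of a sweepout near its spine.
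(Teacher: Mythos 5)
Your argument is correct and coincides with the paper's approach: the paper gives no proof at all here, simply attributing the lemma to Kobayashi--Saeki \cite{KS}, and your reconstruction --- take stability off the spines from \cite{KS} as a black box, then separate the finitely many vertex heights by small, disjointly supported perturbations of $h$ near the (pairwise disjoint) tangency points responsible for each vertex --- is the standard way to pass from stability to the additional condition that each horizontal arc meets at most one vertex. The only remark worth adding is that since $f\times(h\circ\phi)$ agrees with $f\times h$ outside the support of $\phi$, the discriminant and hence the other vertices are literally unchanged there, so the inductive bookkeeping is even simpler than your appeal to openness suggests.
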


Suppose therefore that $f\times h$ is generic.  Points in the square are denoted by $(t,s)$, and we define the surfaces $\Lambda_s = h^{-1}(s)$ and $\Sigma_t = f^{-1}(t)$.  If the vertical line $\{t\}\times[-1,1]$ meets no vertices of the graphic, then $h|_{\Sigma_t}$ is Morse, and its critical points are $\Sigma_t \intersect \Delta$. In particular the fact that a Morse function has at most one singularity at any level corresponds to the fact that the
map from $\Delta$ to the graphic is one-to-one over the smooth points of the graphic.  If $v=(t_0,s_0)$ is a vertex, we see certain transitions as $t$ passes through $t_0$.

\begin{enumerate}

\item {\em Cancelling pair:} If $v$ has valence 2, then 
the edges of the graphic incident to $v$ are either both contained to the left of $\{t_0\}\times(0,1)$, or both to the right of it. As $t$ passes through $t_0$, a pair of singularities of $h|_{\Sigma_t}$, one saddle and one min or max (which we call central), is created or annihilated.

\item {\em Simultaneous singularities:} If $v$ has valence 4,  it is the intersection of the images of two disjoint arcs of $\Delta$. Hence there are two singularities whose relative heights are exchanged as $t$ passes through $t_0$. These can be either saddle or central singularities. Note that the singularities cannot coalesce, for example in a monkey saddle, as they pass each other, since that would produce a vertex in the discriminant set $\Delta$, contradicting the fact that it is a smooth 1-manifold. 

\end{enumerate}

Vertices on the boundary of the square can  have valence either one or two; we will however not need to consider these.

\subsection*{Above and below}
Let  $f$ and $h$ be sweepouts for a manifold $M$, representing the Heegaard splittings   
$(\Sigma, H^-, H^+)$ and   $(\Lambda,V^-,V^+)$, respectively.  For each $s \in (-1,1)$, define  
 $V^-_s = h^{-1}([-1,s])$ and $V^+_s = h^{-1}([s,1])$.  Note that $\Lambda_s = \partial V^-_s = \partial V^+_s$
Similarly, for 
 $t \in (-1,1)$,  $H^-_t = h^{-1}([-1,t])$, $H^+_t = h^{-1}([t,1])$ and $\Sigma_t = \partial H^-_t = \partial H^+_t$.

Throughout the rest of the paper , let $F \subset \Sigma$ denote a compact, connected, essential subsurface, where ``essential'' means that no boundary component of $F$ bounds a disk. We exclude 3-holed spheres. Let $F_t$ be the image of $F$ under the identification of $\Sigma$ with $\Sigma_t$ determined up to isotopy by the sweepout.

\begin{definition}\label{relative mostly}\rm
We will say that $\Sigma_t$ is \textit{mostly above $\Lambda_s$ with respect   to $F$}, denoted 
$$\Sigma_t \succ_F \Lambda_s,$$ if $\Sigma_t \cap V^-_s$ is contained in a subsurface of $\Sigma_t$ that is isotopic into the complement of $F_t$ (or is just contained in a disk, when $F=\Sigma$).  Similarly, $\Sigma_t$ is \textit{mostly below $\Lambda_s$ with respect to $F$}, or
$$\Sigma_t \prec_F \Lambda_s,$$
if $\Sigma_t \cap V^+_s$ is contained in a subsurface that is isotopic into the complement of $F_t$ (or contained in a disk). 

\end{definition}

The case that $F=\Sigma$ corresponds to the notion of {\em mostly  above} and {\em mostly below} from 
\cite{johnson:flipping}. We will mostly be concerned with the case that $F$ is a proper subsurface. 

 Define  $R_a^F $ (respectively $R_b^F$) in $ (-1,1) \times (-1,1)$ to be the set of all  values $(t,s)$ such that 
 $\Sigma_t  \succ_F \Lambda_s $ (respectively $\Sigma_t  \prec_F  \Lambda_s $).   When $F = \Sigma$
these are the sets $R_a$ and $ R_b$ of \cite{johnson:flipping}.

Figure \ref{spanningfig} illustrates some typical configurations of $R_a^F$ and $R_b^F$. Their basic properties are described in the following lemma. 

\begin{figure}[htp]
{\epsfxsize = 3.5 in \centerline{\epsfbox{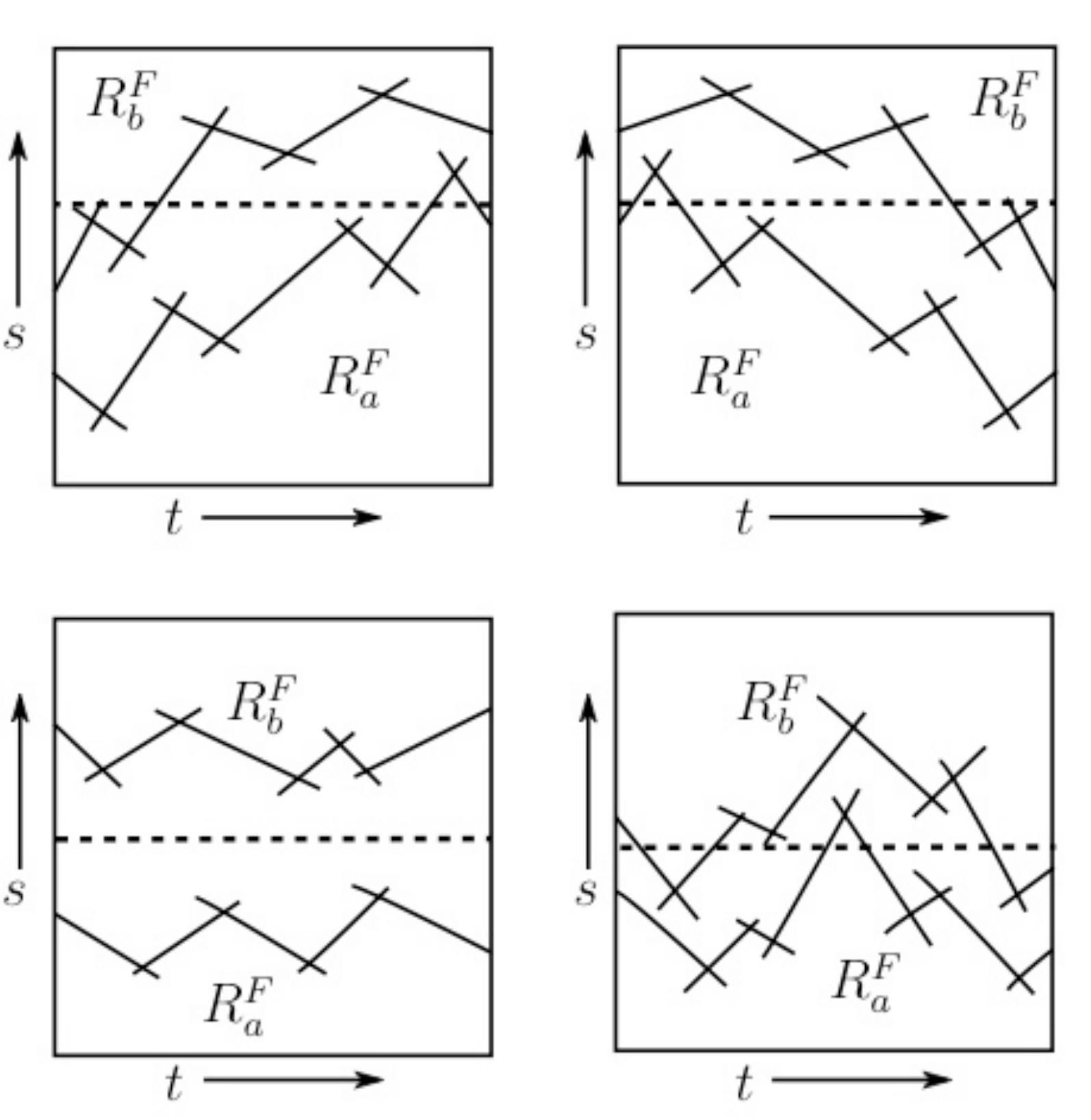}}}
\caption{ Four partial graphics of pairs $f \times h$ of sweepouts. In
  the top two graphics, $h$ $F$-spans $f$. In the bottom left, $h$
  $F$-splits $f$. In the bottom right, $h$ $F$-spans $f$ again but
   the switch between  mostly above and mostly below  happens twice
   along the horizontal arc shown.}
 \label{spanningfig}
\end{figure}

\begin{lemma} \label{R structure}
Let  $f\times h$ be a generic sweepout pair, and let $F$ be an essential subsurface of $\Sigma$. If $F$ is an annulus assume it is not isotopic into $\Lambda$. 

The sets $R^F_a$ and $R^F_b$ are disjoint, open and bounded by arcs of the graphic, so that all interior vertices appearing in $\boundary R^F_a$ or $\boundary R^F_b$ have valence 4. 

Moreover $R^F_a$ and $R^F_b$ intersect each vertical line in a pair of intervals as follows:
For each $t \in (-1, 1)$, there are values $x\le y\in (-1,1)$ such that 
$$ (t,s)\in R_a^F \iff s\in(-1,x) $$
and 
$$ (t,s)\in R_b^F \iff s\in(y,1). $$

\end{lemma}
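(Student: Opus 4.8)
The plan is to establish the claim in three stages: first the monotonicity in $s$ for fixed $t$, then disjointness and openness, and finally the statement that the boundaries are made of graphic arcs meeting only at valence-4 vertices.

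\emph{Step 1: Monotonicity.} Fix $t$. As $s$ decreases, the compression body $V^-_s = h^{-1}([-1,s])$ shrinks, so $\Sigma_t \cap V^-_s$ can only lose pieces, not gain them. More precisely, I would argue that if $\Sigma_t \cap V^-_{s}$ is contained in a subsurface isotopic off $F_t$, then for $s' < s$ the intersection $\Sigma_t \cap V^-_{s'}$ is contained in $\Sigma_t \cap V^-_{s}$ (after possibly a small isotopy to keep things transverse when $s'$ crosses the finite set of vertex-heights over $t$), hence is contained in the same subsurface. This shows $\{s : (t,s)\in R_a^F\}$ is downward closed; a symmetric argument using $V^+_{s'} \subset V^+_s$ for $s' > s$ shows $\{s : (t,s)\in R_b^F\}$ is upward closed. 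So each is an interval of the required form $(-1,x)$ and $(y,1)$ respectively (openness of these intervals will follow from Step 3, or can be seen directly since the defining ``isotopic off $F_t$'' condition is stable under small perturbation of $s$ away from vertex heights). I should also check the boundary behavior: near $s=-1$, $V^-_s$ is a neighborhood of the spine $\Gamma^-$, which meets $\Sigma_t$ in a union of disks, so $(t,s)\in R_a^F$; similarly $(t,s)\in R_b^F$ near $s=1$. Hence $x,y$ are well-defined with $x,y$ possibly equal to $-1$ or $1$ but the intervals are as stated.

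\emph{Step 2: Disjointness.} Suppose $(t,s)$ lies in both $R_a^F$ and $R_b^F$. Then $\Sigma_t \cap V^-_s$ and $\Sigma_t \cap V^+_s$ are each contained in subsurfaces isotopic off $F_t$. Since $\Sigma_t = (\Sigma_t \cap V^-_s) \cup (\Sigma_t \cap V^+_s) \cup (\Sigma_t \cap \Lambda_s)$, and $\Sigma_t \cap \Lambda_s$ is a union of curves (for $(t,s)$ off the graphic, or codimension-one on it), this forces $F_t$ — up to isotopy — to be disjoint from all of $\Sigma_t \cap \Lambda_s$ as well, i.e. $F_t$ can be isotoped into a complementary region of $\Sigma_t \cap \Lambda_s$ lying entirely in $V^-_s$ or entirely in $V^+_s$. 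In either case $F_t$ (hence $F$) is isotopic into $\Lambda$. When $F$ is not an annulus this contradicts essentiality combined with the fact that $F$ is a subsurface of the Heegaard surface $\Sigma$, not of $\Lambda$ — more carefully, being isotopic into $\Lambda$ would make $\partial F$ a collection of curves on $\Lambda$, and one then needs that this is genuinely impossible for an essential subsurface of positive complexity. When $F$ is an annulus this is exactly excluded by the hypothesis that $F$ is not isotopic into $\Lambda$. This rules out $x \ge y$ with the overlap actually occurring; combined with Step 1 it gives $x \le y$ and disjointness of the open sets. I expect this is the step requiring the most care, since ``isotopic off $F_t$'' for the two pieces separately does not immediately patch to a global isotopy of $F_t$ — one has to use that the complement of $F_t$ in $\Sigma_t$ is connected or argue componentwise and then invoke the structure of $\Sigma_t \cap \Lambda_s$.

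\emph{Step 3: Boundary structure.} Openness: the condition defining $R_a^F$ is stable under perturbations of $(t,s)$ that do not change the isotopy class of the pattern $\Sigma_t \cap \Lambda_s$, which is exactly the condition of staying in a complementary region of the graphic. Hence $R_a^F$ and $R_b^F$ are open, being unions of open cells of the graphic complement together with possibly some open edges. To see that the boundary consists of arcs of the graphic: the isotopy class of $\Sigma_t\cap\Lambda_s$, and therefore membership in $R^F_a$, is constant on each complementary region of the graphic and on each open edge, so $\partial R^F_a$ is a subcomplex of the graphic, i.e. a union of closed edges and vertices. Finally, for the vertex valences: by Step 1, the vertical line $\{t\}\times(-1,1)$ meets $R^F_a$ in the single interval $(-1,x)$, so the point $(t,x)$ on its frontier is the unique frontier point on that line; if $(t,x)$ were a valence-2 vertex (a cancelling pair), the two incident edges would lie on the same side of the vertical line $\{t\}\times(-1,1)$, and one checks using the cancelling-pair transition description that the region $R^F_a$ cannot actually have its frontier turn a corner there — more precisely, a valence-2 vertex corresponds to creation/annihilation of a saddle–central pair, which changes $\Sigma_t\cap\Lambda_s$ by adding or removing a trivial curve or a trivial band, an operation that does not affect whether the intersection is isotopic off $F_t$; hence $R^F_a$ looks the same on both sides and $(t,x)$ cannot be a frontier point, contradiction. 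So all interior frontier vertices have valence 4. The same argument applies to $R^F_b$ with the interval $(y,1)$.

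Assembling Steps 1–3 gives exactly the three assertions of the lemma: disjointness and openness of $R^F_a, R^F_b$; their frontiers lying in the graphic with interior vertices of valence 4; and the vertical-line description with the points $x \le y$.
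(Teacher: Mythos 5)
Your Steps 1 and 3 are sound: the monotonicity argument via the nesting of the $V^\pm_s$ and the non-emptiness near $s=\pm 1$ is exactly the paper's, and your valence argument (a cancelling-pair vertex only separates regions that differ by crossing a single center or a trivially-cancelling saddle, neither of which changes the ``contained in a subsurface isotopic off $F_t$'' condition) is a legitimate alternative to the paper's, which instead derives a contradiction between the wedge shape of the two incident edges and the interval structure on vertical lines. Both routes work because the defining condition is monotone under inclusion and constant on complementary regions of the graphic.

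Step 2, however, has a genuine gap. Write $X_\pm=\Sigma_t\cap V^\pm_s$ for $(t,s)$ chosen off the graphic, so $X_-\cup X_+=\Sigma_t$ with common boundary $\Sigma_t\cap\Lambda_s$. Your intermediate claim --- that $F_t$ can be isotoped into a complementary region of $\Sigma_t\cap\Lambda_s$ lying entirely in $V^-_s$ or in $V^+_s$ --- is both unreachable (those regions are exactly the components of $X_-$ and $X_+$, which you have just isotoped $F_t$ off of) and, when $F$ is not an annulus, leads to a non-contradiction: an essential non-annular subsurface of $\Sigma$ can perfectly well be isotopic into $\Lambda$; indeed that is the \emph{conclusion} of the main theorem, and it is precisely why the lemma's hypothesis excludes this only for annuli. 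The correct argument is local to $\Sigma_t$: isotope $F_t$ off $X_-$, so that $F_t$ lies in the interior of $X_+$; then $X_+$ contains $F_t$, and a subsurface containing an essential non-annular $F_t$ cannot be isotoped off $F_t$, because $F_t$ cannot be isotoped off itself (e.g.\ it contains or is cobounded by curves of nonzero geometric intersection number; recall 3-holed spheres are excluded). This contradicts $(t,s)\in R^F_b$. Only when $F_t$ is an annulus does this fail, and there the two conditions force $F_t$ to be parallel to a component of $\Sigma_t\cap\Lambda_s$, hence isotopic into $\Lambda_s$ --- which is what the annulus hypothesis rules out. You correctly sensed that ``the two isotopies do not immediately patch,'' but the real issue is not patching; it is that your final contradiction does not exist in the non-annular case.
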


\begin{proof}
We assume $F$ is a proper subsurface of $S$ (the case $F=\Sigma$ was done in Johnson 
\cite{johnson:flipping}, and our argument is a direct generalization). Openness of $R^F_a$ and $R^F_b$ is clear from the definition. If the sets intersect then  we may select a point $(t,s)$ in the intersection which is 
not in the graphic, so that $\Sigma_t\intersect \Lambda_s$ is a 1-manifold. This 1-manifold divides $\Sigma_t$ into two subsurfaces $X_-=\Sigma_t\intersect V^-_s$ and $X_+ = \Sigma_t\intersect V^+_s$, each of
which can be isotoped off $F_t$. But this is impossible: If $X_-$ can be isotoped off $F_t$ then, after isotopy, 
$X_+$ contains $F_t$ and hence can't be isotoped away from $F_t$, unless $F_t$ is an annulus. 
If $F_t$ is an annulus this means it is isotopic to a neighborhood of the common boundary between $X_+$ and $X_-$, which is isotopic into $\Lambda_s $ contradicting the assumption.  We conclude that $R^F_a$ and $R^F_b$ are disjoint. 

\medskip

Now let $(t,s)\in (0,1)\times(0,1)$ be any point which is  not on the graphic. Again $\Sigma_t$ and 
$\Lambda_s$ are transverse, and hence a small perturbation of $(t,s)$ would yield an isotopic intersection
pattern. In particular a small neighborhood of $(t,s)$ is either contained in $R_a^F$ or in its complement (and similarly for $R_b^F$). It follows that the boundary must be contained in the graphic. 

\medskip

Let $(t,s)\in R_a^F$. Then $\Sigma_t \intersect V^-_s$ can be isotoped out of $F_t$. Since $V^-_s$
monotonically increases with $s$, the same must hold for each $s'<s$. Hence  the set
$\{s:(t,s)\in R_a^F\}$ must, if non-empty, be an interval of the form $(-1,x)$. It is non-empty because, for small enough $s$, $V^-_s$ is a small regular neighborhood of a spine and so intersects $\Sigma_t$ in a union of disks. Similarly  we find that $\{s:(t,s)\in R_b^F\}$ has the form   $(y,1)$. Finally, $x\le y$ follows from the disjointness of $R^F_a$   and $R^F_b$.

It remains to show that only 4-valent vertices appear in the boundary of $R^F_a$ and $R^F_b$. Suppose $v$ is a 2-valent vertex in $\boundary R^F_a$, say. Then there is a neighborhood $U$ of $v$ which intersects the graphic in a pair of arcs incident to $v$, which lie on one side of the vertical line through $v$. Such a pair separates $U$ into two pieces, and hence is equal to $\boundary R^F_a \intersect U$. This contradicts
what we have just proven about the intersection of $R^F_a$ with vertical lines. 

\end{proof}

\subsection*{Relative spanning and splitting}

\medskip

The relations of {\em spanning} and {\em splitting} were introduced in \cite{johnson:flipping}  for pairs of sweepouts.   Here we will extend this notion to spanning and splitting relative to a subsurface $F$.

\begin{definition}\label{ffdef}\rm
We say that $h$ {\it $F$-spans} $f$ if  there is a horizontal arc $[-1,1]\times\{s\}$ in $(-1,1)\times(-1,1)$
that intersects both $R^F_a$ and $R^F_b$. 

\end{definition}

In other words, $h$ $F$-spans $f$ if there are values $s, t_-, t_+ \in [-1,1]$ such that
$\Sigma_{t_-}  \prec_F \Lambda_s $  while $\Sigma_{t_+}  \succ_F \Lambda_s $.   

Figure~\ref{spanningfig} also shows  examples of graphics of pairs of sweepouts that don't span, 
or that span with two distinct such arcs. 

\medskip

The complementary situation is the following: 

\begin{definition}\label{weak splitting}\rm
We say that $h$ {\it weakly $F$-splits} $f$ if there is no horizontal arc $[-1,1]\times\{s\}$ that meets both 
$R^F_a$ and $R^F_b$.

\end{definition}

A somewhat stronger condition which under some circumstances is
equivalent, is the following: 

\begin{definition}\label{strong splitting}\rm
We say that $h$ {\it $F$-splits} $f$  if,  for some $\{s\} \in (-1, 1)$, the arc $[-1,1] \times \{s\}$ is disjoint from the closures of both $R^F_a$ and $R^F_b$.

\end{definition}

We extend these notions to pairs of Heegaard splittings with no sweepout specified: That is, if $\Lambda$ and $\Sigma$ are Heegaard surfaces and there exist some sweepouts $h$ and $f$ representing $\Lambda$ and 
$\Sigma$, respectively, such that $f\times h$ is generic and $h$ $F$-splits $f$ for some $F\subset \Sigma$, we say that $\Lambda$ $F$-splits $\Sigma$; and similarly for $F$-spanning and weak $F$-splitting.

\medskip

\subsection*{Dichotomy}
By definition $F$-spanning and weak $F$-splitting are complementary conditions. Generically, 
$F$-spanning and $F$-splitting are complementary as well, i.e. weak $F$-splitting implies $F$-splitting. The proof proceeds along the lines of \cite{johnson:flipping}, except when $F$ is one of a short list of low-complexity surfaces, when weak $F$-splitting becomes a distinct case:

\begin{lemma}\label{dichotomy}
Let $f$ and $h$ be sweepouts for $M$, and $F$ a connected, essential subsurface of the Heegaard surface 
$\Sigma$ represented by $f$. Suppose that $f\times h$ is generic.

If $F$ is not an annulus,  4-holed sphere or a 1-holed or 2-holed torus, then 
$$ \text{$h$ weakly $F$-splits $f$} \implies \text{$h$ $F$-splits $f$.} $$
Equivalently,  either $h$ $F$-spans $f$, or $h$ $F$-splits $f$. 

In the exceptional cases, if $h$ weakly $F$-splits $f$ but does not $F$-split, then there exists a unique horizontal line $[-1,1]\times \{s\}$ which meets both closures $\bar{R^F_a}$ and $\bar {R^F_b}$ in a single point, which is a vertex of the graphic. 

\end{lemma}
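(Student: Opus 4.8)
The plan is to analyze what happens along a horizontal line $[-1,1]\times\{s\}$ as $s$ varies, using the vertical-line structure established in Lemma \ref{R structure}. For each $t$ we have the interval $(-1,x(t))$ on which $(t,s)\in R^F_a$ and the interval $(y(t),1)$ on which $(t,s)\in R^F_b$, with $x(t)\le y(t)$. Define $x_{\max} = \sup_t x(t)$ and $y_{\min} = \inf_t y(t)$. Weak $F$-splitting says no horizontal line meets both $R^F_a$ and $R^F_b$, which should be equivalent to $y_{\min} \ge x_{\max}$ (more precisely, the open region $\{(t,s): s < x(t)\}$ and $\{(t,s): s > y(t)\}$ cannot be crossed by a common horizontal line). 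If $y_{\min} > x_{\max}$ strictly, then any $s$ strictly between gives a horizontal line disjoint from the closures of both regions, hence $F$-splitting holds; this is the easy case. The work is therefore concentrated on the borderline case $y_{\min} = x_{\max} =: s_0$, where the horizontal line at height $s_0$ touches both closures but $F$-splitting may fail.

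The heart of the argument is to understand the set $\overline{R^F_a}\cap (\{s=s_0\})$ and similarly for $b$. First I would argue, using the edge-and-4-valent-vertex structure of $\partial R^F_a$ from Lemma \ref{R structure}, that $\overline{R^F_a}$ meets the line $s=s_0$ either in an interval with nonempty interior (in which case, by the same analysis applied to $R^F_b$ on the other side, one can still slide $s$ slightly and find an $F$-splitting line unless the two closed sets share a boundary arc along $s=s_0$) or in a single point which must be a local maximum of the function $t\mapsto x(t)$. Since $x(t)$ is a piecewise-smooth function whose graph lies in the graphic, its local maxima occur either at smooth points of an edge or at vertices. A smooth local maximum on an edge would force $x(t)$ to be constant on an open set (a horizontal sub-arc of the graphic), again yielding $F$-splitting lines on either side; so generically the maximum is attained at a single vertex $v = (t_0, s_0)$, which by Lemma \ref{R structure} has valence $4$. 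The same reasoning gives a single vertex $v' = (t_0', s_0)$ realizing the minimum of $y(t)$. For $F$-splitting to genuinely fail one needs $v$ and $v'$ to lie on the same horizontal line $s = s_0$; if $v \ne v'$, genericity (each horizontal arc meets at most one vertex — which is the genericity hypothesis for vertical lines, but the symmetric statement for the roles of $f$ and $h$ must be invoked or the weak genericity of Lemma \ref{generic is generic} used) forbids two vertices at the same height, forcing $v = v'$. This pins down the unique horizontal line and shows $\overline{R^F_a}\cap\overline{R^F_b}\cap\{s=s_0\}$ is the single vertex $v$.

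The remaining point is to explain why this degenerate configuration can occur \emph{only} when $F$ is an annulus, $4$-holed sphere, $1$-holed torus, or $2$-holed torus — i.e., why for all other $F$ the borderline case collapses back to genuine $F$-splitting or to $F$-spanning. Here I would examine the local picture at the vertex $v$: passing through a $4$-valent vertex exchanges the heights of two singularities of $h|_{\Sigma_t}$ (Section \ref{above below}), and one tracks how the subsurface $\Sigma_t\cap V^-_s$ and $\Sigma_t\cap V^+_s$ change. Just to the left and right of $t_0$ along $s=s_0$, one side is mostly-above-with-respect-to-$F$ and the other mostly-below; at $v$ itself the intersection $\Sigma_{t_0}\cap\Lambda_{s_0}$ has exactly one tangency, and the combinatorics of how $F_t$ sits relative to the compressible/incompressible pieces must degenerate. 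The claim is that for $F$ of complexity $\xi(F) = 3g + b - 3$ above the threshold (i.e. not one of the four exceptional surfaces), the subsurface $F_t$ cannot simultaneously be isotopic off $\Sigma_t\cap V^-_s$ for $s$ slightly below $s_0$ and off $\Sigma_t\cap V^+_s$ for $s$ slightly above — there is "enough room" in $F$ that a single saddle/center transition cannot flip the side of $F$, so either both hold on an interval (contradicting that $s_0$ is the supremum/infimum) or neither holds near $s_0$. For the four low-complexity surfaces, a single saddle can carry all of the essential intersection of $\Lambda$ with $F$ across $F$, allowing the switch to happen exactly at the vertex.

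I expect the main obstacle to be the last step: cataloguing, for the four exceptional surfaces, precisely why a single elementary tangency transition can flip "mostly above" to "mostly below" across $F$, and conversely giving a clean complexity argument that rules this out for all larger $F$. This is essentially a local combinatorial analysis of how an arc/curve system in $F$ can change by a single band move, and matching it against the list of surfaces for which $\mathcal{AC}(F)$ has small diameter or for which $\partial F$ has few components — the appearance of exactly these four surfaces strongly suggests the bound comes from when $F$ has Euler characteristic $\ge -2$ together with the annulus, and tying the band-move analysis to that threshold cleanly will require care. The uniqueness of the horizontal line and the fact that the two closures meet in a single vertex should follow more routinely from Lemma \ref{R structure} once the borderline framework is set up.
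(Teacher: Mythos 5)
Your reduction to the borderline case $x_{\max}=y_{\min}=s_0$ and the use of genericity to pin down a single $4$-valent vertex $v=(t_0,s_0)$ realizing both the maximum of $\bar{R^F_a}$ and the minimum of $\bar{R^F_b}$ matches the paper's argument (and you need not worry about ``symmetrizing'' genericity: the paper's definition already requires each \emph{horizontal} arc $[-1,1]\times\{s\}$ to meet at most one vertex, which is exactly what is used). One local flaw along the way: a smooth isolated local maximum of an edge of the graphic does not force the edge to be horizontal on an open set, so your argument for excluding non-vertex maxima fails as stated. The paper instead rules out interior horizontal tangencies of the graphic outright, because such a tangency would correspond to a critical point of $h$ away from the spines, contradicting the definition of a sweepout.

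The genuine gap is the final step, which you flag as the main obstacle but do not carry out, and for which your proposed mechanism points in the wrong direction. The exceptional list does not come from a band-move analysis of arc systems, from the diameter of $\AC(F)$, or from a single saddle transition (a $4$-valent vertex corresponds to \emph{two} simultaneous singularities of $h|_{\Sigma_{t_0}}$, not one tangency as you write). The paper's argument is a short embedding argument: choose $x_-<s_0<x_+$ so that $s_0$ is the only critical value of $h_{t_0}:=h|_{\Sigma_{t_0}}$ in $[x_-,x_+]$, and set $Q=h_{t_0}^{-1}([x_-,x_+])$, a regular neighborhood of the critical level $\Gamma=h_{t_0}^{-1}(s_0)$. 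Since $(t_0,x_-)\in R^F_a$ and $(t_0,x_+)\in R^F_b$, both $h_{t_0}^{-1}([-1,x_-])$ and $h_{t_0}^{-1}([x_+,1])$ can be isotoped off $F_{t_0}$, and these isotopies can be done simultaneously, so $F_{t_0}$ is isotopic into $Q$. Because $\Gamma$ carries exactly two nondegenerate singularities, $Q$ is a disjoint union of disks, annuli, and $3$-holed spheres, together with at most one component that is a $4$-holed sphere or a $2$-holed torus; the only essential subsurfaces of $\Sigma$ (other than the excluded $3$-holed sphere) that embed in such a $Q$ are annuli, $4$-holed spheres, and $1$- or $2$-holed tori. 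Without this step the restriction to the exceptional cases is not established, so the lemma is not proved.
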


\begin{proof}
By Lemma \ref{R structure} the
set of heights of horizontal lines meeting $R^F_a$ has the form
$(-1,x)$ and the set of heights of horizontal lines meeting $R^F_b$
has the form $(y,1)$. If $h$ does not $F$-span $f$, then these sets
are disjoint so $x \le y$. If $x<y$ then any line with height
$s\in(x,y)$ misses both $\bar R^F_a$  and $\bar R^F_b$, so that $h$
$F$-splits $f$, and we are done. 

If $x=y$ then the line $[-1,1]\times\{x\}$ meets a maximum-height point of 
$\bar R^F_a$ as well as a minimum-height point $\bar R^F_b$. Such a
point lies on the graphic by Lemma \ref{R structure} and it must be a
vertex, for the only other possibility is an interior horizontal
tangency of the graphic. Such a tangency corresponds to a critical
point of $h$ away from the spine, which is ruled out by definition of
a sweepout (see \cite{johnson:stable} for details).

Since $f\times h$ is generic, a horizontal line can only meet one
vertex, so the maximum of $\bar{R^F_a}$ and the minimum of
$\bar{R^F_b}$ are the same vertex $(t,x)$ of the graphic. It remains
to show that this can only happen in the given exceptional cases. 

The fact that $(t,x)$ is a vertex (of valence 4 by Lemma \ref{R structure})  means that the function 
$h_t \equiv h|_{\Sigma_t}$ has two critical points on  $\Gamma \equiv h_t^{-1}(x)$. Choose 
$x_- < x_+$ so that $x$ is the unique critical value in $[x_-,x_+]$, and  let 
$$Q=h_t^{-1}([x_-,x_+]),$$ 
which is a regular neighborhood of $\Gamma$ in $\Sigma_t$. Since $(t,x_-)\in R_a^F$, the subsurface 
$Y_- \equiv h_t^{-1}([-1,x_-])$ can be isotoped out of $F_t$.  Since $(t,x_+)\in R_b^F$, the subsurface 
$Y_+\equiv h_t^{-1}([-1,x_-])$ can also be isotoped out of $F_t$. Equivalently $F_t$ can be isotoped out of
each of $Y_-$ and $Y_+$, and it follows that this can be done simultaneously (this is a general property of surfaces). Therefore after this isotopy we find that $F_t \subset Q$. 

However $Q$ cannot be very complicated -- since $h_t^{-1}(x)$ has two critical points, $Q$
is a disjoint union of annuli, disks, 3-holed spheres and possibly one component that is a 4-holed sphere
or 2-holed torus.  (The detailed analysis of possibilities for $Q$, which we will need in Section \ref{boundsect}, will be done in Section \ref{saddle transitions}.) The only essential subsurfaces of $\Sigma_t$ that can embed in $Q$ are therefore annuli, 3-holed spheres, 4-holed spheres, and 1- and 2-holed tori. This takes care of
all the exceptional cases. 

\end{proof}

\section{Spanning implies coherence}
\label{spanning case}

In this section we consider the $F$-spanning case, for which we can show that the surface $F$ is 
isotopic into $\Lambda$. 

\begin{proposition}\label{spanning is good}
Let $\Sigma$ and $\Lambda$ be Heegaard surfaces in a 3-manifold $M$
and let $F\subset\Sigma$ be a proper connected essential
subsurface. If $\Lambda$ $F$-spans $\Sigma$ 
then after isotoping $\Lambda$ we obtain a surface
whose intersection with $\Sigma$ contains $F$.

\end{proposition}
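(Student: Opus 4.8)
The plan is to realize the $F$-spanning hypothesis by a generic sweepout pair, translate it into a statement about two parallel copies of $F$ sitting on opposite sides of $\Lambda$, and then run an innermost-disk argument inside the product region swept out by $F$.

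Concretely, fix sweepouts $f$ and $h$ representing $\Sigma$ and $\Lambda$ with $f\times h$ generic (Lemma~\ref{generic is generic}) and a horizontal arc $[-1,1]\times\{s\}$ meeting both $R^F_a$ and $R^F_b$; write $\Lambda=\Lambda_s$, $V^{\pm}=V^{\pm}_s$, and pick $t_{\pm}\in(-1,1)$ with $\Sigma_{t_-}\prec_F\Lambda$ and $\Sigma_{t_+}\succ_F\Lambda$. Assuming, as we may, that $F$ is not an annulus isotopic into $\Lambda$ (the remaining case being handled separately), Lemma~\ref{R structure} gives that $R^F_a$ and $R^F_b$ are disjoint, so $t_-\ne t_+$. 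The sweepout $f$ identifies $M\smsm(\Gamma^+\cup\Gamma^-)$ with $\Sigma\times(-1,1)$, under which $F_t=F\times\{t\}$. Unwinding the definition, $\Sigma_{t_-}\prec_F\Lambda$ means $\Sigma_{t_-}\cap V^+$ lies in a subsurface of $\Sigma_{t_-}$ isotopic off $F_{t_-}$; realizing that surface isotopy as an ambient isotopy of $M$ supported in a product neighborhood of $\Sigma_{t_-}$, and doing the analogous thing near $\Sigma_{t_+}$ in a disjoint product neighborhood, I would isotope $\Lambda$ so that $F_{t_-}$ lies in the interior of $V^-$ and $F_{t_+}$ lies in the interior of $V^+$. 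In particular $F_{t_-}$ and $F_{t_+}$ are disjoint from $\Lambda$ and lie in distinct components of $M\smsm\Lambda$.

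The heart of the argument is the analysis of the product region $\mathcal F$ between $\Sigma_{t_-}$ and $\Sigma_{t_+}$, that is, the submanifold $F\times[a,b]\subset\Sigma\times(-1,1)$ with $\{a,b\}=\{t_-,t_+\}$: it is embedded and irreducible, its horizontal boundary surfaces are $F_{t_-}$ and $F_{t_+}$, and its vertical boundary $A=\partial F\times[a,b]$ is a union of annuli that is incompressible in $\mathcal F$ because $\partial F$ is essential. After making $\Lambda$ transverse to $\mathcal F$, the intersection $\Pi=\Lambda\cap\mathcal F$ is a properly embedded surface with $\partial\Pi\subset A$; and since $\mathcal F\smsm\Pi=\mathcal F\cap(M\smsm\Lambda)$ while $F_{t_-}$ and $F_{t_+}$ lie in different components of $M\smsm\Lambda$, the surface $\Pi$ separates $F_{t_-}$ from $F_{t_+}$ inside $\mathcal F$. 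I would then isotope $\Lambda$ (within its isotopy class in $M$) to put $\Pi$ into normal form with respect to $\mathcal F$: discard trivial circles and arcs of $\Pi$ using irreducibility of $\mathcal F$ and incompressibility of $A$, and, via further innermost-disk and outermost-arc surgeries, arrange that $\Pi$ is incompressible and $\partial$-incompressible in $\mathcal F$. By the standard classification of such surfaces in the product $\mathcal F\cong F\times I$, every component of $\Pi$ is then isotopic either to a horizontal copy of $F$ or to a vertical annulus; since $\Pi$ still separates the two horizontal ends and vertical annuli do not, some component $\Pi_0$ is isotopic in $\mathcal F$ to a level $F_{t'}$. A final ambient isotopy of $M$ supported near $\mathcal F$ carries $\Pi_0$ onto $F_{t'}$, so that $\Lambda$ now contains $F_{t'}$, a copy of $F$ lying on $\Sigma_{t'}$; since $\Sigma_{t'}$ is isotopic to $\Sigma$, this gives the desired conclusion.

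The step I expect to be the main obstacle is putting $\Pi=\Lambda\cap\mathcal F$ into normal form using only isotopies of $\Lambda$. Since $\Lambda$ need not be incompressible in $M$, one cannot simply invoke an incompressibility argument, and a priori a compressing disk for $\Pi$ inside $\mathcal F$ whose boundary is essential on $\Lambda$ looks like a genuine compressing disk for $\Lambda$ rather than something removable by an isotopy. The way around this is to work with a position of $\Lambda$ minimizing the complexity of $\Lambda\cap\mathcal F$ and observe that any such disk, being contained in the product $\mathcal F\cong F\times I$, yields a complexity-reducing isotopy of $\Lambda$ (for instance via a corresponding innermost disk on $\Lambda$, or by sliding a finger of $\Lambda$ across the disk), contradicting minimality; some care is also needed with the annular boundary $A$, and with the degenerate possibility that $\Lambda$ meets $\mathcal F$ only in vertical annuli, which is ruled out by the separation property.
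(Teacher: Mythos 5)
Your overall strategy is the same as the paper's: realize the spanning hypothesis by a horizontal arc of the graphic, arrange that $F\times\{t_-\}\subset V^-_s$ and $F\times\{t_+\}\subset V^+_s$, and analyze the separating surface $\Lambda\cap(F\times J)$ in the product region. The difference, and the genuine gap, is exactly at the step you flag: you cannot put $\Pi=\Lambda\cap\mathcal F$ into incompressible (let alone $\partial$-incompressible) position by isotopies of $\Lambda$ alone, and your proposed workaround does not close the hole. If $D\subset\mathcal F$ is a compressing disk for $\Pi$ whose boundary is essential in $\Lambda$, then since $\operatorname{int}(D)\subset\mathcal F\smallsetminus\Pi$ is disjoint from all of $\Lambda$, $D$ is an honest compressing disk for the Heegaard surface $\Lambda$ in $M$ --- and Heegaard surfaces of genus $\ge 2$ always admit such disks, which can perfectly well lie inside $\mathcal F$ (e.g.\ an essential handle of $\Lambda$ contained in the product region). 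Neither of your suggested moves applies: there is no ``corresponding innermost disk on $\Lambda$'' precisely because $\partial D$ is essential in $\Lambda$, and pushing a finger of $\Lambda$ across $D$ is not a complexity-reducing isotopy. Minimality of $|\Lambda\cap\partial\mathcal F|$ or of the topology of $\Pi$ therefore yields no contradiction, and the classification of incompressible, $\partial$-incompressible surfaces in $F\times I$ cannot be invoked.

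The paper's resolution is to genuinely \emph{compress} $S=\Lambda_s\cap(F\times J)$ (surgery, not isotopy), and to control what survives homologically rather than by normal form: a surface with boundary in $\partial F\times J$ separates the two ends iff its class in $H_2(F\times J,\partial F\times J)$ is nonzero, this class is unchanged by compression, so some component $S'$ of the compressed surface is homologous to the fiber; its vertical projection is then proper, $\pi_1$-injective and of degree $\pm1$, hence $\pi_1$-surjective, and Theorem~10.2 of Hempel identifies $S'$ with a level $F\times\{t\}$. The compressions are then undone at the end: the original $\Lambda$ is recovered from the compressed surface by reattaching tubes (dual $1$-handles), and the feet of these tubes can be slid off $F\times\{t\}$, so that the composite operation is an ambient isotopy of $\Lambda$ after which it still contains $F\times\{t\}$. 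If you want to keep your normal-form framing, you must replace ``isotope $\Pi$ to be incompressible'' by ``compress, locate a horizontal component, then re-tube and slide the tube feet off the horizontal copy of $F$'' --- which is exactly the paper's argument.
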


\begin{proof}
Let $f$ and $h$ be sweepouts representing  $\Sigma$ and $\Lambda$, respectively, such that $h$ 
$F$-spans $f$ and $f\times h$ is generic.  Hence there is a level surface $\Lambda_s$ and values 
$t_- , t_+ \in (-1, 1)$  such that  $\Sigma_{t_{-}}\prec_F \Lambda_s $  and 
$\Sigma_{t_{+}} \succ_F \Lambda_s $.  We may assume without loss of generality that $t_- < t_+$.

We may identify the complement of the spines of $f$, namely
$f^{-1}((-1,1))$, with $\Sigma\times(-1,1)$ in a way that is unique up
to level-preserving isotopy.  Consider the sub-manifold $F \times
J \subset M$, where $J=[t_-,t_+]$.  Because $\Sigma_{t_{-}}\prec_F \Lambda_s$,
the handlebody $V^+_s= h^{-1}([s,1])$ intersects $\Sigma\times\{t_-\}$ in a set that
can be isotoped outside of $F\times\{t_-\}$; equivalently, the
subsurface $F \times \{t_{-}\}$ can be isotoped within $\Sigma \times
\{t_{-}\}$ so that it is contained in $V^-_s$.  Similarly, since
$\Sigma_{t_{+}} \succ_F \Lambda_s $, we can isotope the surface $F
\times \{t_{+}\}$ inside $\Sigma \times \{t_{+}\}$ so that it is
contained in $V^+_s$. 
After a level-preserving isotopy 
of $\Sigma \times (-1, 1)$,  we may therefore assume that
$F\times\{t_-\}$ and $F\times\{t_+\}$ are contained in $V^-_s$ and
$V^+_s$, respectively. 
The surface $ S = \Lambda_s \cap F \times J$ therefore separates $F
\times \{t_-\}$ from $F \times \{t_+\}$ within $F\times J$, since 
$\Lambda_s$ separates $V^-_s$ from $V^+_s$. 

Note that a surface in $F\times J$  with boundary in $\boundary F
\times J$ separates $F\times\{t_-\}$ from $F\times\{t_+\}$ if and
only if its homology class in $H_2(F\times
J,\boundary F\times J)$ is nonzero.

Compressing $S$ if necessary, we obtain an incompressible surface in the same homology class. Let
$S'$ be a connected component which is still nonzero in $H_2(F\times J,\boundary F\times J)$. Then $S'$ separates $F\times\{t_-\}$  from  $F\times\{t_+\}$ and hence (up to orientation) must be homologous to
$F\times\{t_-\}$. The vertical projection of $S'$ to $F\times\{t_-\}$ is therefore a proper map which is $\pi_1$-injective and of degree $\pm 1$. By a covering argument it must also be $\pi_1$-surjective, and hence
Theorem 10.2 of ~\cite{He} implies that $S'$ is isotopic to a level surface $F\times\{t\}$. 

Making the isotopy ambient and keeping track of the 1-handles corresponding to the compressions that gave us $S'$, we can obtain an isotopic copy of $\Lambda$ which contains $F\times\{t\}$ minus attaching disks for the 1-handles. Now we can slide these disks outside of $F\times\{t\}$. Hence $\Lambda$ itself
is isotopic to a surface containing $F\times \{t\}$, as claimed.

\end{proof}

\section{Saddle transitions} \label{saddle transitions}

In this section we examine more carefully the intersections $ \Sigma_t \intersect \Lambda_s $, and the relationship between their regular neighborhoods in the two surfaces.

Fix $(t,s)$ for the rest of this section. The interesting case is  when $(t,s)$ lies in the graphic, and
hence the surfaces are not transversal, or equivalently, $t$ is a critical value of $f|_{\Lambda_s}$ and $s$ is a critical value of $h|_{\Sigma_t}$. As in the proof of Lemma \ref{dichotomy}, take an interval $[s_-,s_+]$
in which $s$ is the only critical value of $h|_{\Sigma_t}$ (if any), and let  
$$Q=(h|_{\Sigma_t})^{-1}([s_-,s_+]) \subset \Sigma_t.$$ 
Similarly choose an interval $[t_-,t_+]$ containing $t$ with no other critical values of $f|_{\Lambda_s}$, and
define
$$ Z = (f|_{\Lambda_s})^{-1}([t_-,t_+]) \subset \Lambda_s. $$
Then $Q$ is a regular neighborhood of $\Gamma \equiv \Sigma_t \intersect \Lambda_s $ in $\Sigma_t$, and $Z$ is a regular neighborhood of $\Gamma$ in $\Lambda_s$. 

To compare $Q$ and $Z$, fix an identification of $f^{-1}((-1,1))$ with $\Sigma\times(-1,1)$
such that $f$ becomes projection to the second factor, and let
$$p:f^{-1}((-1,1)) \to \Sigma$$ 
denote the projection to the first factor.  For convenience identify $\Sigma$ with $\Sigma_t$ so that 
$p:\Sigma_t \to \Sigma$ can be taken to be the identity. Now we can compare $p(Z)$ to $Q$ within $\Sigma$.

The trivial case is that of a component of $\Gamma$ that contains no critical points; in this case the components of $Q$ and $Z$ that retract to it are both annuli and the restriction of $p$ is  homotopic to a diffeomorphism between them. Let us record this observation: 

\begin{lemma} \label{parallel levels}
Two level curves of $f|_{\Lambda}$ which are not separated by critical points have homotopic $p$-images in $\Sigma$.
 
\end{lemma}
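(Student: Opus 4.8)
The statement to prove is Lemma \ref{parallel levels}: if $\gamma_0$ and $\gamma_1$ are two level curves of $f|_\Lambda$ (that is, components of $\Sigma_{t_0}\cap\Lambda$ and $\Sigma_{t_1}\cap\Lambda$ for some values $t_0 < t_1$) that are not separated by any critical point of $f|_\Lambda$, then their $p$-images in $\Sigma$ are homotopic. The plan is to exploit directly the fact that the region of $\Lambda$ between $\gamma_0$ and $\gamma_1$ is an annulus, because $f|_\Lambda$ restricted to that region is a Morse function with no critical points.

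\emph{Step 1: identify the annulus in $\Lambda$.} Saying $\gamma_0$ and $\gamma_1$ are "not separated by critical points" means there is a subarc $[t_0,t_1]$ of parameter values such that $\gamma_0$ and $\gamma_1$ are components of $(f|_\Lambda)^{-1}(t_0)$ and $(f|_\Lambda)^{-1}(t_1)$ respectively, and the piece of $\Lambda$ they cobound — the component $A$ of $(f|_\Lambda)^{-1}([t_0,t_1])$ containing them — contains no critical points of $f|_\Lambda$. Then $f|_\Lambda$ restricted to $A$ is a proper submersion onto $[t_0,t_1]$ with the two circles as boundary fibers, so $A$ is diffeomorphic to $S^1\times[t_0,t_1]$, an annulus, with $\gamma_i$ as the two boundary circles and every intermediate level $(f|_\Lambda)^{-1}(t)\cap A$ a core circle.

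\emph{Step 2: transport the homotopy via the product structure.} Under the identification $f^{-1}((-1,1))\cong\Sigma\times(-1,1)$ in which $f$ is the second-factor projection, the annulus $A$ sits inside $\Sigma\times[t_0,t_1]$ as a graph-like subsurface meeting each slice $\Sigma\times\{t\}$ (for $t\in[t_0,t_1]$) in a single circle $\gamma_t$, namely the level circle from Step 1. The family $t\mapsto p(\gamma_t)$ is then a continuous (indeed smooth) family of immersed — actually embedded — loops in $\Sigma$ interpolating between $p(\gamma_0)$ and $p(\gamma_1)$: the map $A\to\Sigma$ given by $p$ restricted to $A$, precomposed with the diffeomorphism $S^1\times[t_0,t_1]\cong A$ from Step 1, is a homotopy in $\Sigma$ from a parametrization of $p(\gamma_0)$ to one of $p(\gamma_1)$. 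This is exactly the assertion that the two $p$-images are homotopic (freely homotopic as loops in $\Sigma$).

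\emph{Main obstacle.} There is essentially no hard step; the content is simply the observation that a compact piece of $\Lambda$ with no critical points of the Morse function $f|_\Lambda$ is an annulus, and that the projection $p$ carries this annulus to a homotopy between its boundary circles. The only point requiring a word of care is that "not separated by critical points" must be interpreted correctly when $\gamma_0,\gamma_1$ lie at the same level (the degenerate case $t_0=t_1$, where $\gamma_0=\gamma_1$ and there is nothing to prove) or when the relevant component of the sublevel region is non-compact only because of the spines — but the spines are removed throughout, and on $f^{-1}((-1,1))$ every level set of $f$ is the compact surface $\Sigma$, so $(f|_\Lambda)^{-1}([t_0,t_1])$ is compact and the submersion argument applies. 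Hence the lemma follows immediately.
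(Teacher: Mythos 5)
Your proof is correct and is essentially the argument the paper has in mind: the paper records this lemma as an immediate observation that the region of $\Lambda$ between the two curves is an annulus (a component of the regular neighborhood $Z$) on which $p$ restricts to a map homotopic to a diffeomorphism, which is exactly your Steps 1 and 2 spelled out. No issues.
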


\qed

\medskip

Another simple case is of a component of $\Gamma$ which is an isolated (hence central) singularity, and the corresponding components of $Q$ and $Z$ are disks. 

If $\Gamma_0$ is a component containing one saddle singularity, then it is a figure-8 graph, and
both associated components $Q_0\subset Q$ and $Z_0\subset Z$ must be  3-holed spheres, or pairs of pants. In this case we have:

\begin{lemma}\label{pants levels}
If $\Gamma_0$ is a figure-8 component of $\Gamma$ then  $p|_{Z_0}$ is homotopic to a diffeomorphism 
$Z_0\to Q_0$. In particular $p(\boundary Z_0)$ is homotopic to a collection of simple disjoint curves in 
$\Sigma$. 

\end{lemma}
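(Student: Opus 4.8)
The setup is that $\Gamma_0$ is a figure-8 component of the intersection $\Gamma = \Sigma_t\cap\Lambda_s$, that is, a single saddle singularity of both $h|_{\Sigma_t}$ and $f|_{\Lambda_s}$ together with the four arcs emanating from it, reconnected into a wedge of two circles. Its regular neighborhoods $Q_0$ in $\Sigma_t$ and $Z_0$ in $\Lambda_s$ are each a neighborhood of a figure-8 in a surface, hence each a 3-holed sphere (the two cases, orientable or not, are both pairs of pants because the ambient surfaces are orientable). The projection $p$ restricted to $Z_0$ is an immersion (its differential is injective since $f|_{\Lambda_s}$ is Morse near the saddle and $p$ is the product projection), but it need not be an embedding: sheets of $Z_0$ at different $f$-levels may project over one another. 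The claim is that nonetheless $p|_{Z_0}$ is homotopic, through maps of $Z_0$ into $\Sigma$, to a diffeomorphism onto $Q_0$.

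**Main steps.** First I would describe $Z_0$ and $Q_0$ explicitly via a local model of the saddle: in coordinates near the saddle point $x$, write $f = f(x) + uv$ on $\Lambda_s$ (so $Z$ near $x$ is the preimage of a small interval around $f(x)$, a neighborhood of the curves $uv=0$, i.e. a neighborhood of a cross), and correspondingly $h = h(x) + u'v'$ on $\Sigma_t$ giving the cross for $Q$. The key point is that the discriminant/graphic structure forces the two local pictures to be compatible: the branch $\Gamma_0$ of $\Gamma$ through $x$ is exactly the cross $uv=0$, the same set seen as a subset of $\Lambda_s$ or of $\Sigma_t$. Then I would argue that $p$ sends $\Gamma_0\subset Z_0$ onto $\Gamma_0\subset Q_0$ by the identity (after our identification of $\Sigma$ with $\Sigma_t$), since $\Gamma_0 = \Sigma_t\cap\Lambda_s$ is literally the same subset of $M$, lying in the slice $\Sigma\times\{t\}$, so $p$ fixes it. Next, both $Q_0$ and $Z_0$ deformation retract onto $\Gamma_0$; I would use these retractions to build the homotopy. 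Precisely: $p|_{Z_0}$ is homotopic (rel nothing, just a homotopy in $\Sigma$) to $r_Q\circ p|_{Z_0}$, where $r_Q:N(\Gamma_0)\to\Gamma_0$ is a retraction onto the figure-8, because $p(Z_0)$ lies in a small neighborhood of $\Gamma_0$ (shrink the intervals $[s_\pm],[t_\pm]$). But $r_Q\circ p$ on $Z_0$ equals the retraction $r_Z:Z_0\to\Gamma_0$ (as $p$ is the identity on $\Gamma_0$ and both retractions collapse the product directions), and $r_Z$ is homotopic through embeddings-up-to-isotopy to the inclusion $\Gamma_0\hookrightarrow Q_0$ composed with... — more cleanly: any two regular neighborhoods of a figure-8 in a surface are ambient isotopic, so there is a diffeomorphism $\varphi:Z_0\to Q_0$ which is the "identity on $\Gamma_0$", and $p|_{Z_0}$ and $\varphi$ both factor up to homotopy through the retraction onto $\Gamma_0$, hence are homotopic as maps into $\Sigma$.

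**The obstacle.** The delicate part is not the topology of pairs of pants but verifying that $p|_{Z_0}$ is close enough to a retraction onto $\Gamma_0$ — i.e. that by shrinking $[t_-,t_+]$ and $[s_-,s_+]$ one really does force $p(Z_0)$ into an arbitrarily small neighborhood of $Q_0$, and that the local saddle models on $\Sigma_t$ and $\Lambda_s$ genuinely match up along $\Gamma_0$ rather than merely being abstractly isomorphic. This is where the stability of $f\times h$ (Kobayashi–Saeki) is used: near $x$, stability pins down the normal form of the pair of Morse functions, so the cross $uv=0$ for $f|_{\Lambda_s}$ and the cross for $h|_{\Sigma_t}$ are the same curve in $M$, namely $\Gamma_0$. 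Once that is in hand, the homotopy is routine: $p$ restricted to $Z_0$ is, up to small homotopy, a collapse onto $\Gamma_0$ followed by the standard inclusion of a neighborhood of a figure-8, and so is any diffeomorphism $Z_0\to Q_0$ fixing $\Gamma_0$; hence the two are homotopic in $\Sigma$. The final sentence of the lemma then follows because $\partial Z_0$ consists of simple closed curves in $Z_0$, $\varphi$ is a diffeomorphism, so $\varphi(\partial Z_0)=\partial Q_0$ is a collection of disjoint simple closed curves in $\Sigma_t$, and $p(\partial Z_0)$ is homotopic to this.
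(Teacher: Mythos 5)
Your argument is correct, and it reaches the conclusion by a somewhat different route than the paper. The paper's proof works at the level of the boundary: it observes that each component of $\boundary Z_0$ lies in a level surface $\Sigma_{t_\pm}$ and is therefore \emph{embedded} by $p$, that it is homotopic within $Z_0$ to an essential loop of $\Gamma_0$ (so its $p$-image is homotopic to that same loop inside $Q_0$), and then invokes the fact that the only essential simple closed curves in a $3$-holed sphere are boundary-parallel to conclude $p(\boundary Z_0)\simeq \boundary Q_0$. You instead work at the level of the spine: $p$ fixes $\Gamma_0$ pointwise, $p(Z_0)$ can be pushed into an arbitrarily small neighborhood of $\Gamma_0$ by shrinking $[t_-,t_+]$, and since $\Gamma_0$ is an aspherical deformation retract of both $Z_0$ and $Q_0$, both $p|_{Z_0}$ and a diffeomorphism $Z_0\to Q_0$ fixing $\Gamma_0$ factor up to homotopy through the collapse onto $\Gamma_0$ and hence are homotopic in $\Sigma$. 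Your version proves the ``homotopic to a diffeomorphism'' clause more explicitly (the paper leaves that final step as ``the lemma follows''), while the paper's version records the genuinely embedded nature of $p(\boundary Z_0)$, which is the feature reused later (e.g.\ in Lemmas \ref{Q and Z} and \ref{L diam bound}). Two minor slips that do not affect the result: $p|_{Z_0}$ need not be an immersion away from the tangency (the tangent plane of $\Lambda_s$ may contain the fiber direction of the product structure), and orientability of the ambient surface alone does not force a figure-8 neighborhood to be a pair of pants (a one-holed torus is possible for a general embedding); what rules that out here is the level-set structure of $\Gamma_0$, which forces boundary circles on both sides of the critical level --- a fact the paper establishes in the setup preceding the lemma.
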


Recall that for any point of the graphic that is not a vertex, $\Gamma$ contains exactly one singularity, so if it is a saddle we have exactly one such figure-8 component of $\Gamma$.

\begin{proof}
Because each boundary curve of $Z_0$ lies on a level surface $\Sigma_{t_-}$ or $\Sigma_{t_+}$, it is embedded by $p$. On the other hand it is homotopic to an essential curve in $\Gamma_0$, and since $p$ is
continuous the same is true for its image in $\Sigma$.  The only essential simple curves in a 3-holed sphere are parallel to boundary components, so $p(\boundary Z_0)$ is homotopic to $\boundary Q_0$. The lemma follows. 

\end{proof}

The most complicated case is that of  a component $\Gamma_0\subset \Gamma$ that contains two saddle singularities. There are only two possible isomorphism types for $\Gamma_0$ as a graph with two vertices of valence 4,  and a total of three ways that $\Gamma_0$ can embed as a {\em level set} of a function with nondegenerate singularities. These are indicated in Figure \ref{Qcases}, which applies to both $Q_0$ and $Z_0$. 

\begin{figure}[htp]
\centerline{\epsfbox{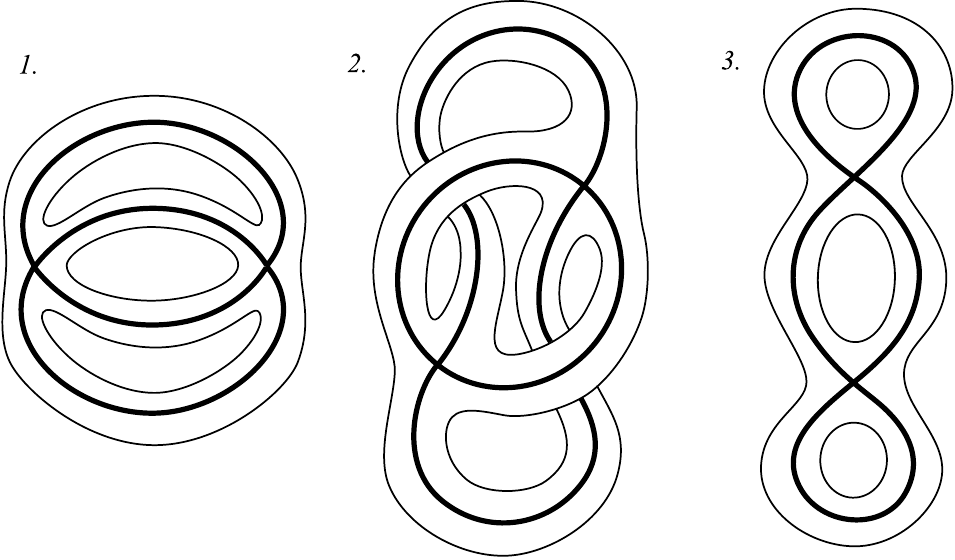}}
\caption{The three possible types for $Q_0$ or $Z_0$, depicted
  as immersed surfaces in the plane with the level set $\Gamma_0$ in heavy
  lines. The first and third are 4-holed spheres and the second is a
  2-holed torus.}
\label{Qcases}
\end{figure}

To understand how $p$ looks in this case,  note first that at the vertices of $\Gamma_0$ the surfaces are
tangent so $p$ is a local diffeomorphism. Hence it either preserves or reverses orientation at these points. This determines the local picture of $Q_0$ and $Z_0$ at each tangency, and the rest of the configuration is determined by how $Z_0$ attaches along the edges of $\Gamma_0$. Either the two orientations match or they do not. When they match, one sees that  $p|_{Z_0}$ is homotopic to a diffeomorphism -- we call this the untwisted case. If they do not match -- the twisted case -- then in fact $Q_0$ and $Z_0$ may not even be diffeomorphic. The three twisted cases are depicted in Figure \ref{QandZ}, where in the first case $Q_0$ is
a 4-holed sphere and $Z_0$ a 2-holed torus, in the middle case they change roles, and in the third case both are 4-holed spheres but $p$ is not homotopic to a diffeomorphism.

\begin{figure}[htp]
\centerline{\epsfbox{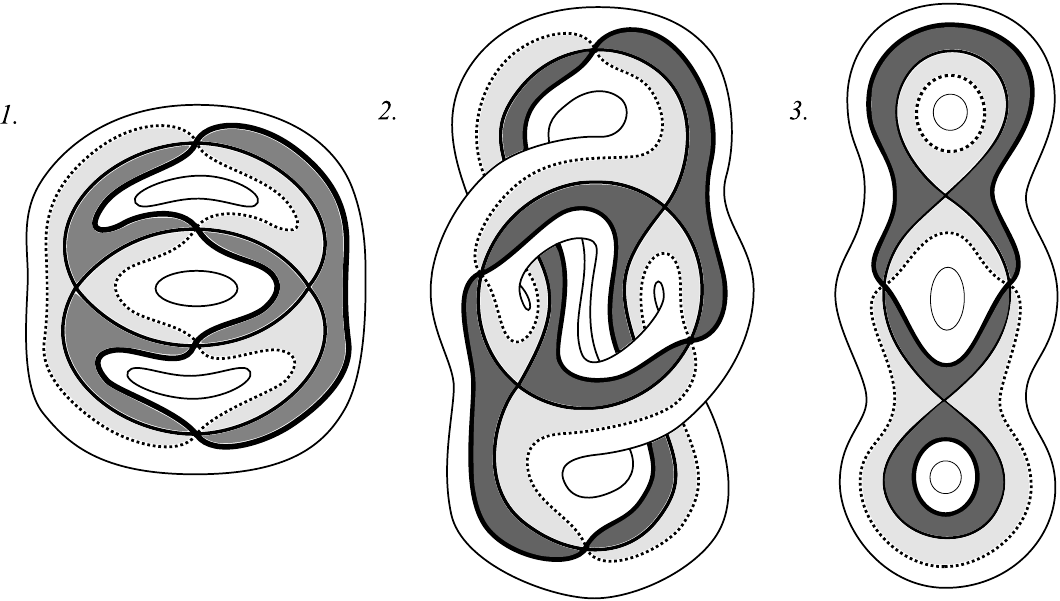}}
\caption{The twisted cases of $Q_0$ and $Z_0$. In each case $Q_0$
  is depicted immersed in the plane, while $Z_0$ intersects it. The
  parts of $Z_0$ above the plane are shaded darker 
  than the parts below. $\boundary_+Z_0$ is a heavy line,
  $\boundary_-Z_0$ is dotted. The map $p$ is essentially projection to the
  plane.}
\label{QandZ}
\end{figure}

Note as in Lemma \ref{pants levels}  the boundary curves of $Z_0$ are mapped to simple curves; however in the twisted case their images are not disjoint. Their intersections are prescribed by Figure \ref{QandZ}, and we wish to record something about how they look from the point of view of our subsurface $F$. In fact the only time that this double-saddle case actually occurs is when $h$ weakly $F$-splits $f$, and $(t,s)$ is the vertex in the
intersection of  $\bar{R^F_a}$ and $\bar{R^F_b}$, as discussed in Lemma \ref{dichotomy}. In this case $F$ (up to isotopy) lies in $Q_0$, which allowed us  to  conclude in Lemma \ref{dichotomy} that it must be in one of the exceptional cases. 

Now divide up $\boundary Z_0$ as  $\boundary_+Z_0 = Z_0 \intersect f^{-1}(t_+)$ and
$\boundary_-Z_0 = Z_0 \intersect f^{-1}(t_-)$, each of which is embedded by $p$. 

\begin{lemma}\label{Q and Z}
Suppose that $h$ weakly $F$-splits $f$, $(t,s)$ is the vertex comprising $\bar{R^F_a} \intersect \bar{R^F_b}$, and $\Gamma_0$ is the component of $\Gamma = \Sigma_t\intersect \Lambda_s$ whose regular neighborhood $Q_0$ in $\Sigma_t$ contains $F$. Let $Z_0$ be the regular neighborhood of $\Gamma_0$ in
$\Lambda_s$.

If two components $\alpha_+\subset p(\boundary_+ Z_0)$ and $\alpha_-\subset p(\boundary_- Z_0)$
intersect $F\subset Q_0$ essentially, then 
$$ d_F(\alpha_+,\alpha_-) \le 3. $$

\end{lemma}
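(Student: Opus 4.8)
The plan is to reduce the statement to a finite case-check governed by Figure~\ref{QandZ}, using the fact that $Q_0$ is one of only three surfaces (two 4-holed spheres and a 2-holed torus) and that $F$ embeds essentially in $Q_0$. First I would observe that since $F$ is essential in $\Sigma_t$ and contained in $Q_0$, and $Q_0$ has very small complexity, the curve/arc complex $\AC(F)$ is itself of bounded diameter in the relevant metric, OR $F$ is one of the surfaces where $\AC(F)$ has infinite diameter (annulus, 4-holed sphere, 1- or 2-holed torus) but still sits inside $Q_0$ in a constrained way. Either way, the key point is that $\boundary Z_0$ maps under $p$ to a collection of simple closed curves whose pairwise configuration is completely pinned down by the pictures in Figure~\ref{QandZ} (in the untwisted case the images are disjoint, in which case $d_F(\alpha_+,\alpha_-)\le 1$ trivially once we project; in the twisted case the images intersect, but in a prescribed minimal pattern).

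Next I would handle the untwisted case and the twisted case separately. In the untwisted case $p|_{Z_0}$ is homotopic to a diffeomorphism $Z_0\to Q_0$, so $p(\boundary_+Z_0)$ and $p(\boundary_-Z_0)$ together form (a subset of) $\boundary Q_0$, hence are disjoint simple closed curves; if $\alpha_+$ and $\alpha_-$ both meet $F$ essentially then their projections $\pi_F(\alpha_+)$ and $\pi_F(\alpha_-)$ are realized by disjoint arcs/curves in $F$, giving $d_F(\alpha_+,\alpha_-)\le 1$. In the twisted case I would go through the three configurations of Figure~\ref{QandZ} one at a time. In each, $\alpha_+$ and $\alpha_-$ are explicit simple closed curves in $Q_0$ (a 4-holed sphere or 2-holed torus) meeting in the minimal pattern shown. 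The task is then: given that $F\subset Q_0$ is essential and both $\alpha_\pm$ cross $F$ essentially, bound $d_F(\pi_F(\alpha_+),\pi_F(\alpha_-))$. Since $\alpha_+$ and $\alpha_-$ are each simple and their intersection number is small (one or two points per relevant region), $\alpha_+\cup\alpha_-$ has a regular neighborhood which is a small subsurface of $Q_0$; a curve or arc in the complement of this neighborhood, if essential in $F$, is disjoint from both $\pi_F(\alpha_+)$ and $\pi_F(\alpha_-)$, yielding the bound $d_F\le 2$. The worst case is when no such essential intermediary exists in $F$ directly, and one must pass through $\partial$-parallel curves of $Q_0$, which accounts for the bound $3$ rather than $2$ — exactly the same loss of one that appeared in the 4-holed sphere case of Lemma~\ref{disks everywhere}.

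The main obstacle I expect is the bookkeeping in the twisted case: because $p|_{Z_0}$ is not a diffeomorphism, $\pi_F(\alpha_\pm)$ could a priori be an arc rather than a closed curve, and one must check in each of the three twisted pictures that the two projections, whatever their type, are joined by a path of length at most $3$ in $\AC(F)$ \emph{for every} essential embedding $F\hookrightarrow Q_0$. Rather than enumerate embeddings, I would argue uniformly: fix a component $\gamma$ of $\boundary F$; since $\boundary F\subset Q_0$ and $\boundary Q_0\cup\Gamma_0$ carries the intersection pattern, $\gamma$ is disjoint from or has controlled intersection with $\alpha_\pm$; then use that a pants-type or once-punctured-torus-type subsurface $F$ always contains an essential arc or curve disjoint from any single given simple closed curve, and iterate. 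The constant $3$ is exactly what one gets by chaining "disjoint from $\alpha_+$" $\to$ "a curve of $\boundary Q_0$" $\to$ "disjoint from $\alpha_-$" when a direct common disjoint curve fails to exist, and the genericity/weak-$F$-splitting hypothesis guarantees we are genuinely in one of the listed low-complexity configurations so that this chaining terminates.
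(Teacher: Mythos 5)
Your overall skeleton (untwisted versus twisted, then a finite check against the three configurations of Figure~\ref{QandZ}) matches the paper's, but two of the load-bearing steps do not survive scrutiny. First, your mechanism for the bound $3$ in the ``filling'' case is wrong: when $\alpha_+$ and $\alpha_-$ together fill $Q_0$ (as in case (1), where they meet four times in a 4-holed sphere), you propose to ``pass through $\boundary$-parallel curves of $Q_0$.'' But when $F$ is isotopic to $Q_0$ those curves are peripheral in $F$ and are not vertices of $\AC(F)$, and when $F$ is a proper subsurface they need not meet $F$ essentially; either way they cannot serve as middle vertices of a path. The paper instead exhibits an explicit chain $(\alpha^+, b^+, b^-, \alpha^-)$ of length $3$ whose two middle vertices are essential \emph{arcs} $b^\pm$ in $Q_0$ (Figure~\ref{twoarcs}), with $b^+$ disjoint from $\alpha^+$, $b^-$ disjoint from $\alpha^-$, and $b^+$ disjoint from $b^-$. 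Some such concrete intermediaries are unavoidable; disjointness from a regular neighborhood of $\alpha_+\union\alpha_-$ is simply not available.

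Second, and more seriously, your proposed uniform argument (``a pants-type or once-punctured-torus-type subsurface $F$ always contains an essential arc or curve disjoint from any single given simple closed curve, and iterate'') collapses exactly in the case the lemma is really needed for, namely $F$ an annulus. For an annulus, $\pi_F$ is defined via the compactified annular cover $\hat F$, distance in $\AC(F)$ is essentially $1$ plus the intersection number of lifted arcs, and there is no ``disjoint essential arc'' to iterate through: two arcs in $\AC(\hat F)$ can be arbitrarily far apart, so chaining through disjointness proves nothing. The paper's proof spends most of its length precisely here: it lifts $\boundary Q_0$ to a union $\Theta$ of boundary-parallel arcs in $\hat F$, lifts the arcs $b^\pm$ and extends them through the disks cut off by $\Theta$ to proper arcs $\hat b^\pm$ disjoint from the lifts of $\alpha^\pm$ (Figure~\ref{theta}), and in case (2) invokes the estimate of \cite[\S2.1]{farb-lubotzky-minsky} that simple curves meeting once have lifts to $\hat F$ crossing at most twice. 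Without an argument of this kind your proof does not establish the bound for annular $F$, which is one of the exceptional cases the constant $c(F)=2$ in the main theorem exists to handle. (A minor point: in the untwisted case the paper observes that $p(\boundary Z_0)$ is peripheral in $Q_0$, so the hypothesis that $\alpha_\pm$ meet $F$ essentially is vacuous there; your ``$d_F\le 1$'' conclusion is harmless but beside the point.)
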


\begin{proof}
In the untwisted case all boundary components of $Z_0$ map to boundary components of $Q_0$, and hence are peripheral and the lemma holds vacuously. The same occurs (via Lemma \ref{pants levels}) if $Q_0$ is
a 3-holed sphere and $F$ is an annulus.  We therefore consider the twisted cases from now on. 

Consider first the case that $F$ is actually isotopic to $Q_0$.  Note that $F$ can only be a proper subsurface of $Q_0$ if $F$ is an annulus, or if $F$ is a 1-holed torus and $Q_0$ is a 2-holed torus.  We return to these cases in the end. 

We can read off $\alpha_+$ and $\alpha_-$ from the diagrams in Figure \ref{QandZ}: they are components of the thickened curves and the dotted curves, respectively. In case (1), they intersect each other four times, and one can see from Figure \ref{twoarcs}(a) that they are connected by a chain of length 3 in which the two middle vertices are the arcs $b^\pm$ in the figure. In case (2) each possible $\alpha_-$ intersects each possible 
$\alpha_+$ exactly once, and it is easy to see that the distance is 2. In case (3) they intersect twice (here two components are actually inessential in $Q_0$, so we must consider the others), and again the distance is 3. 

\begin{figure}[htp]
\centerline{\epsfbox{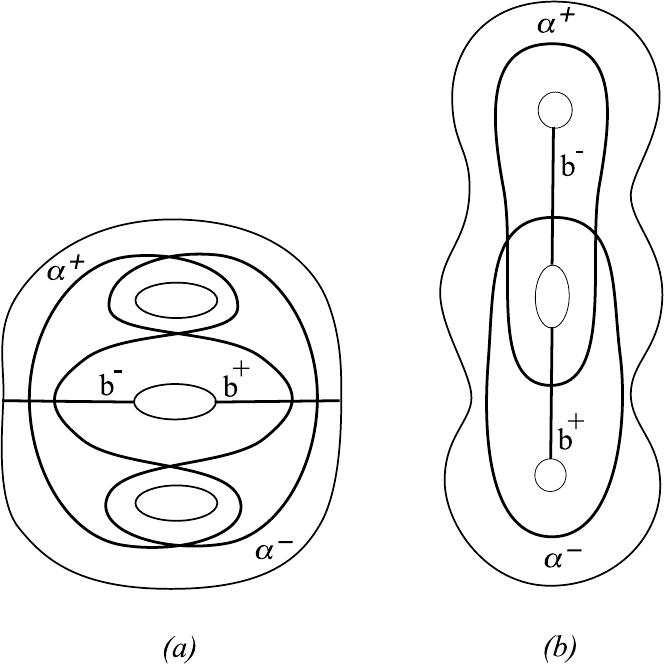}}
\caption{Checking that $d_{\AC(Q_0)}(\alpha^+,\alpha^-) = 3$}
\label{twoarcs}
\end{figure}

Consider now the case that $Q_0$ is a 2-holed torus -- case (2) of Figure \ref{QandZ} -- and $F$ is a 1-holed torus.  Again $\alpha_+$ and $\alpha_-$ intersect exactly once, therefore their intersections with $F$ (if any)
intersect at most once. Hence their distance in $\AC(F)$ is  at most 2. 

Finally we consider the case that $F$ is an annulus. We can assume $F$ is not boundary-parallel in $Q_0$, because otherwise it would not intersect $\boundary Z_0$ essentially. The possibilities for $Q_0$ are again enumerated in Figure \ref{QandZ}. 

In case (1), let $(\alpha^+,b^+,b^-,\alpha^-)$ be the sequence from Figure \ref{twoarcs}(a). We claim that if $F$ intersects $\alpha^+$ and $\alpha^-$ essentially then it must intersect $b^+$ and $b^-$ essentially as well. This is because if $F$ misses $b^-$ it must be parallel to $\alpha^-$, and similarly for $b^+$ and $\alpha^+$. 
This will imply that the same sequence gives a distance bound of 3 in $\AC(F)$, provided we understand how to lift the arcs $b^\pm$ to the annulus complex. 

\begin{figure}[htp]
\centerline{\epsfbox{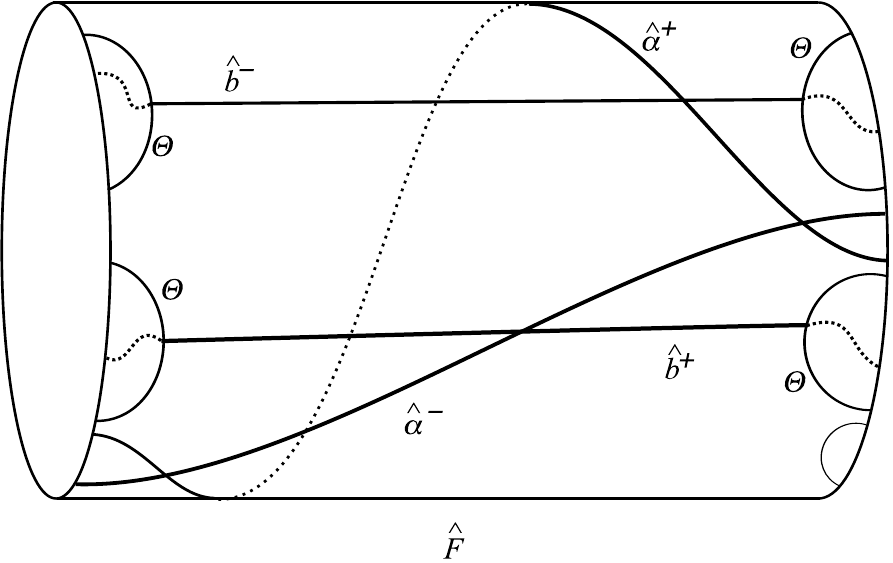}}
\caption{Lifting curves and arcs to the annulus $\hat F$.}
\label{theta}
\end{figure}

The boundary components of $Q_0$ lift to a union $\Theta$  of boundary-parallel arcs in the annulus $\hat F$ (see Figure \ref{theta}). The arcs $b^\pm$ lift to disjoint arcs connecting  components of $\Theta$, and since they cross $F$ essentially, we can choose lifts with endpoints on  arcs of $\Theta$ adjacent to opposite sides of
$\hat F$. Given such a lift, we can join its endpoints to $\boundary \hat F$ using arcs in the disks bounded by the components of $\Theta$ that it meets. This results in disjoint properly embedded arcs $\hat b^\pm$, such that $\hat b^+$ is disjoint from the lifts of $\alpha^+$ because the latter never cross $\Theta$, and similarly for
$\alpha^-$. We conclude that $d_{\AC(F)}(\alpha^+,\alpha^-)\le 3$. 

In case (2), Figure \ref{QandZ} shows that any choice of $\alpha^+$ and $\alpha^-$ intersect exactly
once. If they intersect $F$ essentially, by the argument given in \cite[\S2.1]{farb-lubotzky-minsky}
their lifts to $\hat F$ cross at most twice.  We conclude that the distance in $\AC(F)$ is at most 3. 

In case (3) we can use exactly the same argument as for case (1), using the arcs in Figure \ref{twoarcs}(b).

\end{proof}

\section{Splitting bounds distance}
\label{boundsect}

In this section we put together the results obtained so far to show that $F$-splitting gives a bound on 
$d_F(\Sigma)$. 

\begin{proposition}
\label{neitherfacinglem}
Let $\Sigma$ and $\Lambda$ be Heegaard surfaces for $M$, and let
$F\subset\Sigma$ be a proper connected essential subsurface. If $\Lambda$ 
$F$-splits $\Sigma$ then 
$$d_F(\Sigma) \le 2g(\Lambda) .$$
If $\Lambda$ weakly $F$-splits $\Sigma$ then
$$d_F(\Sigma) \le 2g(\Lambda) + 2.$$
\end{proposition}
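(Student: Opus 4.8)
The plan is to use $F$-splitting to produce, for a suitable level $s$, a horizontal line $[-1,1]\times\{s\}$ disjoint from the closures $\bar{R^F_a}$ and $\bar{R^F_b}$, and then to run over all $t\in(-1,1)$ and track how the intersection pattern $\Lambda_s\cap\Sigma_t$ meets $F_t$. First I would fix sweepouts $h$ and $f$ representing $\Lambda$ and $\Sigma$ with $f\times h$ generic and $h$ $F$-splitting $f$, and fix such an $s$. For each $t$, since $(t,s)\notin\bar{R^F_a}\cup\bar{R^F_b}$, neither $\Sigma_t\cap V^-_s$ nor $\Sigma_t\cap V^+_s$ can be isotoped off $F_t$; in particular the curves $\Lambda_s\cap\Sigma_t$ must intersect $F_t$ essentially (otherwise one of the two pieces would lie in the complement of $F_t$). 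So for every $t$ we get at least one curve of $\Lambda_s\cap\Sigma_t$ whose $\pi_{F_t}$ is defined, and we can choose a projected arc/curve $\mu_t\in\AC(F)$. The goal is to show that as $t$ runs from $-1$ to $1$ the sequence of such $\mu_t$ moves a bounded distance per ``event,'' so the total distance is controlled by the number of events, which in turn is controlled by $g(\Lambda)$.

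Next I would analyze the two endpoints and the transitions. Near $t=-1$, $\Sigma_t$ meets the spine $\Gamma^-$ of $f$, but $\Lambda_s$ is a fixed Heegaard surface: the curves $\Lambda_s\cap\Sigma_t$ are, up to isotopy in $\Sigma$, a fixed system (they are the slices of $\Lambda_s$ by levels near the bottom), and crucially they are exactly the boundary of the subsurface $\Sigma_t\cap V^-_s$, which is a regular neighborhood of (a spine of) $V^-_s$ — equivalently, $\Lambda_s\cap\Sigma$ viewed near the bottom consists of curves bounding disks or sub-handlebodies of $H^-$. The key point is that a curve of $\Lambda_s\cap\Sigma_t$ near $t=-1$ that projects essentially to $F$ gives an element of $\DD_F(H^-)$ (up to bounded error, using that $\Lambda_s$ bounds $V^-_s$ on the bottom so these curves are meridians of $H^-$ or fillings thereof), and symmetrically near $t=+1$ we land in $\DD_F(H^+)$. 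Hence $\mu_{-1}$ is within bounded distance of $\DD_F(H^-)$ and $\mu_{+1}$ within bounded distance of $\DD_F(H^+)$, so $d_F(\Sigma)\le d_{\AC(F)}(\mu_{-1},\mu_{+1})+O(1)$.

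Then I would bound $d_{\AC(F)}(\mu_{-1},\mu_{+1})$ by summing over the vertices of the graphic crossed by the line $\{t\}$, i.e. the values $t$ where $\Sigma_t\cap\Lambda_s$ is non-transverse. For $t$ in an interval between consecutive such values, $\Lambda_s\cap\Sigma_t$ is, up to isotopy, constant, so $\mu_t$ can be taken constant. When $t$ passes a saddle vertex (cancelling pair or simultaneous singularities), the change in $\Gamma=\Sigma_t\cap\Lambda_s$ is local: a figure-8 or double-saddle modification, governed by the neighborhoods $Z_0\subset\Lambda_s$ and $Q_0\subset\Sigma_t$ described in Section \ref{saddle transitions}. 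Lemma \ref{parallel levels} and Lemma \ref{pants levels} say that away from the new saddle the curves' $p$-images are homotopic, and the two ``new'' curves $\alpha_\pm$ (the boundary components of $Z_0$ on the two sides of the saddle level) are mapped to simple curves; when one of the old and one of the new curves meet $F$ essentially, a single figure-8 move changes the projected arc by at most $2$ (they are disjoint or nearly so in $Q_0$, project to $\AC(F)$ with bounded intersection), and a double-saddle move — which is exactly the weak-splitting exceptional configuration — changes it by at most $3$ by Lemma \ref{Q and Z}. Central singularities (min/max) don't change the curve system relevant to $F$. Thus each vertex contributes at most $2$ to the distance, except we may have to allow one exceptional vertex contributing $3$. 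Finally, the number of saddle vertices on the line $\{t\}\times[-1,1]$ — equivalently the number of critical points of $f|_{\Lambda_s}$ counted appropriately — is bounded by the Morse theory of $f|_{\Lambda_s}$, hence by $-\chi(\Lambda_s)=2g(\Lambda)-2$ (each index-1 critical point); chaining these with the endpoint contributions gives $d_F(\Sigma)\le 2g(\Lambda)$ in the $F$-splitting case and $d_F(\Sigma)\le 2g(\Lambda)+2$ in the weak case, the extra $+2$ coming either from the single exceptional double-saddle vertex or from the fact that in the weak case the separating line only touches (rather than misses) $\bar{R^F_a}$ and $\bar{R^F_b}$ at a vertex, costing one extra transition on each end.

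\textbf{Main obstacle.} The hard part will be making precise and uniform the claim that passing a saddle changes the projected arc by a bounded amount \emph{for every choice} of which intersection curves meet $F$ essentially — i.e. controlling $\pi_F$ of the \emph{entire} system $\Lambda_s\cap\Sigma_t$ across a transition, not just one distinguished curve — and in particular handling the low-complexity exceptional cases where $F$ embeds in $Q_0$, where Lemma \ref{Q and Z} must be invoked and the bound degrades to $3$. Bookkeeping the endpoint contributions (that near $t=\pm1$ the relevant curves really are in $\DD_F(H^\mp)$ up to a controlled constant, and that this constant is absorbed without inflating the final bound beyond $2g(\Lambda)$, resp. $2g(\Lambda)+2$) is the other delicate point, and is presumably where the exclusion of the very lowest-complexity $F$ (annulus etc., handled by the $c(F)$ term in the main theorem rather than here) would otherwise intrude.
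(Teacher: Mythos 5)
Your overall strategy---sweep $t$ across $(-1,1)$, track $\pi_F$ of the intersection curves $\Lambda_s\intersect\Sigma_t$, and bound the number of saddle transitions by $-\chi(\Lambda)=2g(\Lambda)-2$---is indeed the paper's strategy, but two essential pieces are missing and your arithmetic does not close. The first gap is the one you flag as the "main obstacle" but never resolve: the Euler-characteristic count only bounds the number of saddles whose associated pair of pants is \emph{essential in $\Lambda$}. A saddle with a boundary curve trivial in $\Lambda$ is not charged against $-\chi(\Lambda)$, yet a priori such a saddle could still move the projection to $F$ if the curves involved are essential in $\Sigma_t$. The paper resolves this by first proving (Lemma \ref{interval}) that, assuming $d_F(\Sigma)>3$, there is a nontrivial interval $[u,v]$ on which every loop of $\Sigma_t\intersect\Lambda$ that is trivial in $\Lambda$ is also trivial in $\Sigma_t$---so that inessential transitions leave $\LL_t$ unchanged---and then runs the count only over a \emph{maximal} such interval, not over all of $(-1,1)$. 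That lemma itself requires Lemma \ref{disks everywhere} (to reduce to the case where every meridian meets $F$) plus an innermost-disk and disk-exchange argument; none of this is in your sketch, and without it the number of distance-changing events is simply not bounded by $2g(\Lambda)-2$.

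Second, your per-saddle cost of $2$ is both incorrect and fatal to the stated bound: $2\cdot(2g(\Lambda)-2)$ plus endpoint terms exceeds $2g(\Lambda)$ for every $g\ge 2$. The correct cost of an essential saddle is $1$: by Lemma \ref{pants levels} the level curves just before and just after have pairwise disjoint simple $p$-images in $\Sigma$, and two disjoint curves both meeting $F$ essentially project to $\AC(F)$ within distance $1$. Likewise the endpoint contribution cannot be an unquantified $O(1)$ coming from curves near $t=\pm1$ being "meridians or fillings thereof." The paper instead takes regular values just outside the maximal $[u,v]$, finds there a loop trivial in $\Lambda$ but essential in $\Sigma_t$, and compresses it (using the defining property of $[u,v]$) to an honest meridian $\beta\in\DD(H^\mp)$ meeting $F$ essentially and disjoint from the adjacent level curves, contributing exactly $+1$ on each side. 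The resulting totals $2g(\Lambda)-2+1+1=2g(\Lambda)$, and in the weak case either $2g(\Lambda)-1+2$ or $2g(\Lambda)-2+3+1$, are tight; your accounting would not reproduce them.
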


\begin{proof}
Choose sweepouts $h$ and $f$ with $f\times h$ generic, such that $h$ $F$-splits $f$ or weakly $F$-splits $f$, according to our hypothesis. 

In the $F$-splitting case there is a value $s$ such that $[-1,1]\times\{s\}$ is disjoint from the closures of 
$R_a^F$ and $R_b^F$. In particular this means that there is an interval of such values and we may choose one such that $[-1,1]\times\{s\}$ meets no vertices of the graphic. It follows that the function $f|_{\Lambda_s}$ is a Morse function, having at most one critical point per critical value.  We fix this  $s$ and henceforth identify $\Lambda$ with $\Lambda_s$. 

In the weak $F$-splitting case there is a value of $s$ such that $[-1,1]\times\{s\}$ intersects the closures of $R_a^F$ and  $R_b^F$ only at their unique intersection point, which is a vertex $(t,s)$ that we will call the 
{\em weak splitting vertex}. We again fix this $s$ and let $\Lambda = \Lambda_s$. 

The proof proceeds along the following lines:  The intersections $\Lambda \intersect \Sigma_t$ are the level sets of $f|_\Lambda$, and the idea is to consider them as curves on $\Sigma$, argue using the $F$-splitting property that they intersect $F$ essentially, and then use the topological complexity of $\Lambda$ to
bound the diameter in $\AC(F)$ of the corresponding set. This kind of argument originates with Kobayashi 
\cite{kobayashi:heights}. It was extended in various ways by Hartshorne \cite{hartshorn}, Bachman-Schleimer \cite{bachman-schleimer:bundles}, Scharlemann-Tomova \cite{scharlemann-tomova:genusbounds} and Johnson \cite{johnson:flipping}. Our argument is a direct extension of the one used in \cite{johnson:flipping}.

We use the map $p:f^{-1}((-1,1)) \to \Sigma$, as in Section \ref{saddle transitions}, to map level curves of $f|_{\Lambda}$ into $\Sigma$. The lemmas in Section \ref{saddle transitions} will help us control what happens as we pass through critical points of $f|_{\Lambda}$. 

One important complication is the possibility that curves that are essential in one surface are trivial in the other. To handle this we adapt an argument of \cite{johnson:flipping}:

\begin{lemma}\label{interval}
If  $d_F(\Sigma) > 3$, then there is some non-trivial interval  $[u,v] \subset (-1,1)$ such that for each regular 
$t \in [u,v]$, every loop of $\Sigma_t \cap \Lambda$ that is trivial in  $\Lambda$ is trivial in $\Sigma_t$. 

\end{lemma}

Here and below when we say $t$ is regular we mean it is a regular
value for $f|_\Lambda$.

\begin{proof}  
As in Lemma \ref{parallel levels}, the isotopy classes in $\Sigma$ 
of level sets of $f|_{\Lambda}$ are constant over an
interval between two critical values. 
Thus it suffices to find a single regular value $t$ such that every
loop of $\Sigma_t \cap \Lambda$ that is trivial in $\Lambda$ is
trivial in $\Sigma_t$.

Note also that if $\Sigma\ssm F$ is compressible to at least one side of
$\Sigma$ then $d_F(\Sigma) \le 3$ by Lemma \ref{disks everywhere}.
Hence we may assume that every disk in $H^+$ or $H^-$ intersects $F$.  

Assume, seeking a contradiction, that for every regular $t$ there is a loop of
$\Sigma_t \cap\Lambda$ that is essential in $\Sigma_t$ and trivial
in $\Lambda$. We can assume this loop is innermost within $\Lambda$
among intersection loops that are essential in $\Sigma_t$. Within the
disk that it bounds in $\Lambda$ there might still be intersection
loops that are trivial in both surfaces, but after some disk exchanges
we see that we have a loop
of $\Sigma_t \cap \Lambda$ which bounds an essential disk in $H^-_t$
or $H^+_t$.  The loops of $\Sigma_t \cap \Lambda$, for regular
values of $t$, are pairwise disjoint in $\Sigma_t$, so if there are
loops of $\Sigma_t \cap \Lambda$ that bound disks on opposite sides
of $\Sigma_t $ then $d_F(\Sigma) \leq 1$. Note we are using here the
fact that all such disks must meet $F$ essentially.

If $\Lambda$ $F$-splits $\Sigma$ and $s$ has been chosen as above,
then for a non-regular value $t_0$, there is a unique critical
point. If it is a saddle point then
Lemma \ref{pants levels} tells us that
the $p$-images of loops of intersection of $\Sigma_t \cap \Lambda$ for the
regular values $t$ just before $t_0$ and just after $t_0$ are also
pairwise disjoint in $\Sigma$. We call this a {\em saddle
  transition}. So if the loops switch from bounding a disk on one
side of $\Sigma_{t_0} $ to the other, we again find that the distance
$d_F(\Sigma)$ is at most one. If the point is a central singularity
then (again referring to \S\ref{saddle transitions}) it corresponds to
the appearance or disappearance of a level curve which is trivial in {\em
  both} $\Lambda$ and $\Sigma$, and so can be ignored. We call this a
{\em central transition}.

If $\Lambda$ {\em  weakly} $F$-splits $\Sigma$ and $s$ has been chosen as above,
then a non-regular value $t_0$ could occur at the weak splitting
vertex, $(t_0,s)$. 
In this case, recall that $F$
is in one of the exceptional cases, and by Lemma \ref{Q and Z}, any
level curves just before and just after $t_0$ which intersect $F$
essentially have distance at most 3 in $\AC(F)$. 
Hence we obtain $d_F(\Sigma) \le 3$ if these loops
correspond to disks on opposite sides. 

Now, for $t$ near $-1$, all the loops of
intersection bound disks on the negative side of $\Sigma_t$ and for
$t$ near $1$, they all bound disks on the positive side. Hence they
must switch sides at some point, thus implying $d_F(\Sigma) \leq 3$.  We
conclude that there is some regular value $t$ for which the assumption
does not hold. Thus the first paragraph gives the desired interval
$[u, v]$.

 \end{proof}

\medskip

For a regular value $t\in (-1,1)$ of $f|_\Lambda$, let $\LL_t$ denote the set of
nontrivial isotopy classes in
$\Sigma$ of the  $p$-images of the curves of
$(f|_\Lambda)^{-1}(t)$.
 For an interval $J\subset (-1,1)$ let
$\LL_J$ denote the union of $\LL_t$ over regular $t\in J$. 

We now observe that
for each regular $t\in (-1,1)$, $\LL_t$ contains a curve
that intersects $F$ essentially, and in particular $\pi_F(\LL_t)$ is
nonempty. 
This is because both $F$-splitting and weak $F$-splitting tell us that
$(t,s)$ is in neither $R^F_a$ nor $R^F_b$ (it cannot be the weak
splitting vertex since $t$ is regular), so that 
both $V^+_s\intersect
\Sigma_t$ and $V^-_s\intersect \Sigma_t$ cannot be isotoped off $F_t$
in $\Sigma_t$,
and hence neither can their common boundary.

We can now bound the diameter in $\AC(F)$ of $\LL_{[u,v]}$ for $[u,v]$
satisfying the conclusion of  Lemma \ref{interval}.

\begin{lemma}\label{L diam bound}
Let $[u,v]$ be an interval 
such that for each regular $t \in [u,v]$, every loop of
$\Sigma_t \cap \Lambda$ that is trivial in  $\Lambda$ is trivial
in $\Sigma_t$. If $(u,v)\times\{s\}$ encounters no vertices of the
graphic then
$$
\diam_{F}(\LL_{[u,v]}) \le 2g(\Lambda)-2.
$$
If $(u,v)\times\{s\}$ meets the weak splitting vertex then
$$
\diam_{F}(\LL_{[u,v]}) \le 2g(\Lambda)-1.
$$
\end{lemma}

\begin{proof}
The critical values of $f|_\Lambda$ cut $[u,v]$ into intervals. Let
$t_0<\cdots<t_n\in(u,v)$ be a selection of one regular point for each
interval. Then $\union_i\LL_{t_i} = \LL_{[u,v]}$. 

Consider the transition from $\LL_{t_i}$ to $\LL_{t_{i+1}}$. If it is
a central transition then only the loss or gain of a curve trivial in
both $\Lambda$ and $\Sigma$ is involved.  Hence
$\LL_{t_i} = \LL_{t_{i+1}}$. 

Suppose it is a saddle transition (as in the proof of Lemma \ref{interval}),
but that at least one of the curves 
of the associated 3-holed sphere is trivial in $\Lambda$. Then its $p$-image
is trivial in $\Sigma$, and the remaining two curves are trivial or
homotopic. Once more $\LL_{t_i} = \LL_{t_{i+1}}$.

If it is an {\em essential} saddle transition, meaning all three
boundary curves are essential in $\Lambda$, then Lemma \ref{pants levels}
implies that all the curves of $\LL_{t_i}$ and $ \LL_{t_{i+1}}$ are
pairwise disjoint. 

Suppose we are in the first case, where $(u,v)\times\{s\}$ encounters
no vertices of the graphic. Then these transitions are the only possibilities.

Since every $\LL_{t_i}$ intersects $F$ essentially we find that 
each $\mu_i \equiv \pi_F(\LL_{t_i})$ is nonempty, and two successive
$\mu_i$ are equal unless they differ by an essential saddle, in which
case they are at most distance 1 apart. Since the
pants that occur among level sets of $f|_{\Lambda}$ are all disjoint,
and since the number of disjoint essential pants in $\Lambda$ is
bounded by $-\chi(\Lambda) = 2g(\Lambda) - 2$, this gives the desired 
bound on $\diam(\pi_F(\LL_{[u,v]}))$.

If $(u,v)\times\{s\}$ meets the weak splitting vertex then, as in the 
proof of Lemma \ref{interval}, we apply Lemma \ref{Q and Z} to show
that the curves before and after have
distance at most $3$ in $\AC(F)$. On the other hand such a transition uses up
two saddles. If we can show that both are essential in $\Lambda$, this
will make our count higher by
just 1, i.e. we get a bound of  $2g(\Lambda)  - 1$. We therefore
proceed to show this. 

Note first that only the twisted case can
occur here: If $Q_0$ and $Z_0$ are in the untwisted case then in fact $p(\boundary
Z_0)$ is isotopic to $\boundary Q_0$ and hence does not intersect
$F$. This means that for the level $t_i$ just after (or before) the
weak splitting vertex, $\LL_{t_i}$ does not intersect $F$, which is a
contradiction. Hence this case cannot happen. 

Recall that in the weak splitting case $F$ is isotopic into $Q_0$ (notation as in Section
\ref{saddle transitions}).  Suppose $Q_0$ is an essential subsurface of
$\Sigma$ (which holds in particular whenever 
$F$ is isotopic to $Q_0$, since $F$ itself is essential).
By the hypothesis on $[u,v]$
the boundary curves of $Z_0$ cannot be trivial in $\Lambda$ since they
map to nontrivial curves in $Q_0$. Hence $Z_0$ is essential in
$\Lambda$ so both saddles are essential.

Suppose that $Q_0$ is not essential in $\Sigma$, and hence $F$ is a
proper subsurface of $Q_0$. If $Q_0$ is a 4-holed sphere, as in cases
(1) and (3) of Figure \ref{QandZ}, then $F$ must be an annulus. Since
at least one of the boundary components of $Q_0$ is inessential in $\Sigma$,
$F$ must be  isotopic to one of the remaining essential boundary components of
$Q_0$. This means that $F$ has no essential intersections with
$\boundary Z_0$, which as above contradicts the fact that $\LL_{t_i}$
always intersects $F$ essentially.

If $Q_0$ is a 2-holed torus, as in case (2) of Figure \ref{QandZ}, 
then we observe that none of the boundary curves of $Z_0$ can be
inessential in $\Sigma$, since each one has exactly one intersection
point with one of the others. Hence they must be nontrivial in
$\Lambda$ as well, so again we get two essential saddles, and therefore
obtain a bound of $2g(\Lambda)-1$.

\end{proof}

We are now ready to complete the proof of Proposition \ref{neitherfacinglem}.

\medskip

If $d_F(\Sigma) \leq 3$ then the claim of Proposition
\ref{neitherfacinglem} follows immediately, since  we assumed
$\Lambda$ has genus at least two. Hence we may assume that
$d_F(\Sigma) > 3$. 

In particular we can obtain at least one interval $[u,v]$ satisfying
the conclusion of Lemma \ref{interval}.
Note that if two such intervals intersect then
their union also satisfies the conclusion of Lemma
\ref{interval}. Hence we can let $[u,v]$ be a {\em maximal} such
interval. The endpoints $u$ and $v$ must be critical values, else we
could enlarge the interval. Note also  $u > -1$, since for $t$
close to $-1$, $\Sigma_t$ is the boundary of a small neighborhood
of the spine $f^{-1}(-1)$ and so intersects $\Lambda$ in small circles
that bound disks in $\Lambda$ but are essential in
$\Sigma_t$. Similarly we see $v < 1$. Let $u'<u$ be a regular value not
separated from $u$ by any critical values. Then $\Lambda \intersect \Sigma_{u'}$ must
contain a component $\beta$ that is trivial in $\Lambda$ but essential in
$\Sigma_{u'}$. 

Moreover $\beta$ must in fact be trivial in the handlebody 
$f^{-1}([-1,u'])$ -- in other words $\beta$ determines an element of $\DD(H^-)$.
To see this, let $E$ be the disk in $\Lambda$ bounded by $\beta$. Let
$u''\in (u,v)$ be a regular value, and let
$E_1= E\intersect f^{-1}([-1,u''])$. Then $E_1$ contains a
neighborhood of $\beta$, and any internal boundary component of $E_1$ is
trivial in $E$, and hence in $\Lambda$. Since $u''\in[u,v]$, such a
curve is also trivial in $\Sigma_{u''}$, and so after a surgery and isotopy we can
obtain a disk in $f^{-1}([-1,u'])$, whose boundary is $\beta$. 

Similarly, if we
consider a regular $v'>v$ not separated from $v $ by critical values
we obtain $\beta'\in\DD(H^+)$ in the level set of $v'$.

As before, since $d_F(\Sigma) > 3$ we may assume by 
Lemma \ref{disks everywhere} that both $\beta$ and $\beta'$ intersect
$F$ essentially.

Now assume we are in the $F$-splitting case, so that all critical
values have single critical points.  In that case, 
by Lemma \ref{pants levels}, $\beta$ is disjoint from the regular
curves at level just above $u$, and $\beta'$ is disjoint from the
regular curves at level just below $v$. It follows that
$d_F(\beta,\LL_{[u,v]}) \le 1$, and similarly for $\beta'$. We
conclude via Lemma \ref{L diam bound} that $d_F(\beta,\beta') \le
2g(\Lambda)-2+2$, which is what we wanted to prove.

In the weakly $F$-splitting case, there could be one critical value
corresponding to a vertex of the graphic. If this occurs inside
$(u,v)$ then we have already taken account of it in the bound of
$2g(\Lambda)-1$ in Lemma
\ref{L diam bound}, so we get a final bound of 
$d_F(\beta,\beta') \le 2g(\Lambda)+1$. If the vertex
occurs at $u$ or at $v$, then we get contribution of $3$ instead
of 1 from one of the disks. Lemma \ref{L diam bound} gives us $2g(\Lambda)-2$ in that case,
so we get $d_F(\beta,\beta') \le 2g(\Lambda)-2+4 =
2g(\Lambda)+2$. That concludes the proof of Proposition \ref{neitherfacinglem}

\end{proof}

\section{Proof of the main theorem}
\label{proof main}

\begin{proof}[Proof of Theorem~\ref{mainthm}]
Let $f$ and $h$ be sweepouts for $\Sigma$ and $\Lambda$, respectively.
Isotope $f$ and $h$ so that $f \times h$ is generic. 

Suppose first that $F$ is not an annulus, 4-holed sphere, or 1- or
2-holed torus. If $d_F(\Sigma) > 2g(\Lambda)$ then, by Proposition
\ref{neitherfacinglem}, $h$ cannot $F$-split $f$. By
Lemma \ref{dichotomy}, if $h$ does not $F$-split $f$ then $h$ must
$F$-span $f$, and then Proposition \ref{spanning is good} implies that
after isotopy the
intersection of $\Lambda$ and $\Sigma$ contains $F$.

Now suppose $F$ is an annulus, 4-holed sphere, or 1- or
2-holed torus. If $d_F(\Sigma) > 2g(\Lambda) +2$ then,
by Proposition \ref{neitherfacinglem}, $h$ cannot {\em weakly} $F$-split $f$.
By definition this means $h$ $F$-spans $f$, and again we are done by
Proposition \ref{spanning is good}. 

\end{proof}

\bibliographystyle{amsplain}
\bibliography{math}

\end{document}